\theoremstyle{plain}
\newtheorem {lemma}{Lemma}[]
\newtheorem {theorem}[lemma]{Theorem}
\newtheorem {proposition}[lemma]{Proposition}
\theoremstyle{remark}
\newtheorem* {remark}{Remark}
\newtheorem {example}[lemma]{Example}
\theoremstyle{definition}
\newcommand{\qt}{(\frac{a,b}{F})}
\newcommand{\oqt}{(\frac{-1,-1}{F})}
\newcommand\SO[1][n]{{\operatorname{SO}_{#1}}}
\newcommand\Orth[1][n]{{\operatorname{O}_{#1}}}
\newcommand\K[1][1]{{\operatorname{K}_{#1}}}
\newcommand{\qi}{\mathbf i}
\newcommand{\qj}{\mathbf j}
\newcommand{\qk}{\mathbf k}
\newcommand{\mQ}{\mathcal Q}
\newcommand{\mH}{\mQ}
\newcommand{\rr}{\mathbb R}
\newcommand\CK[1][1]{\operatorname{CK}_{#1}}
\newcommand\SK[1][1]{\operatorname{SK}_{#1}} % This is $D^{(1)}/D'$.
\newcommand\Br{\operatorname{Br}}
\def\F{\mathbb F}
\def\Q{\mathbb Q}
\def\R{\mathbb R}
\def\Z{\mathbb Z}
\DeclareMathOperator{\Gal}{Gal} %
\newcommand{\Nrd}[1][{}]{{\operatorname{Nrd}_{#1}}} % The reduced norm
\newcommand{\Trd}[1][{}]{{\operatorname{Trd}_{#1}}} % The reduced trace
\newcommand\ol{\overline}
\newcommand\mychar{{\operatorname{char}}}
\newcommand\GL[1][d]{{\operatorname{GL}_{#1}}} % General linear group
\newcommand\co{{\,:\,}}
\def\divides{{\,|\,}}           % spacing: 3/18.
\newcommand\DEG[2]{[{#1}\,{:}\,{#2}]}
\newcommand\paper[1]{{\it {#1}}}
\long\def\forget#1\forgotten{}
\begin{document}

\title[Maximal subgroups of division algebras]{On maximal Subgroups of the multiplicative group of a division algebra }

\author{R. Hazrat}
\address{
Department of Pure Mathematics\\
Queen's University\\
Belfast BT7 1NN\\
United Kingdom} \email{r.hazrat@qub.ac.uk}

\author{A. R. Wadsworth}
\address{
Department of Mathematics\\
University of California at San Diego\\
La Jolla, California 92093-0112\\
U.S.A.}
 \email{arwadsworth@ucsd.edu}

%\date{30 Jan 08}

\begin{abstract} The question of existence of a maximal subgroup in the
multiplicative group~$D^*$ of a division algebra $D$ finite dimensional
over its center $F$ is investigated. We prove that if $D^*$~has no
maximal subgroup, then $\deg(D)$ is not a power of $2$, $F^{*2}$
is divisible, and for each odd prime~$p$
dividing $\deg(D)$, there exist noncyclic
division algebras of degree $p$ over~$F$. 
\end{abstract}

\maketitle

\section{Introduction}

Let $D$ be a non-commutative division ring with center $F$. The
structure of subgroups of the multiplicative group
$D^*=D\backslash \{0\}$, in general, is unknown. Finite subgroups
of $D^*$ have been classified by Amistur \cite{amt}. Normal and
subnormal subgroups of $D^*$ have been studied over the last 70
years. Herstein (\cite{lam}, 13.26) showed that the number of
conjugates of a non-central element of $D$ is infinite. (In fact
it has the same cardinal number as $D$, \cite{scott}). This
implies that a non-central normal subgroup of a division ring is
``big.'' Confirming this, Stuth \cite{stuth} proved that if an
element commutes with a non-central subnormal subgroup of a
division ring, then it is central. In fact he proved that if
$[x,G] \subseteq F$ where $G$ is a subnormal subgroup of $D^*$ and
$ [x,G]=\{xgx^{-1}g^{-1} \mid g \in G\}$ then $x \in F$. He
concluded that a subnormal subgroup of a division ring could not
be solvable. Another remarkable result
has recently been obtained in
 major work  by Rapinchuk, Segev and
Seitz  \cite{rapsegec}. They showed that a normal subgroup of
finite dimensional division ring which has a finite quotient in
$D^*$ contains one of the groups appearing in the derived series of
$D^*$, i.e., the quotient group itself is solvable.

Now, as with the normal subgroups, one would like to know the
structure of maximal subgroups of $D^*$ and how ``big'' they are
in $D^*$. A maximal subgroup of a nilpotent group is normal.
However $D^*$ is not solvable and thus not nilpotent. Indeed, there
exist division algebras which contain non-normal maximal subgroups
(see Section \ref{ck1} below). The recent papers
\cite{akbari4,akbarimah,akbari,ebrahimi,kesh,mahdavi} study
various aspects of maximal subgroups in the multiplicative group
of a division ring. But, the question of  existence of maximal subgroups
in an arbitrary division ring has not been settled.
%% seems not to be known \cite{akbari}.
The most extensive previous result in this direction
was proved by Keshavarzipour and Mahdavi-Hezavehi.
They showed  in Cor.~2
of  \cite{kesh} that if  $D$ is a division algebra with center
$F$, and with  prime power degree $p^n$, and $D$~is not a quaternion
algebra, then $D^*$
has a maximal subgroup
if  $\mychar(F) = 0$  or $\mychar(F) = p$ or
$F$ contains a primitive $p$-th root of unity.

In this note we investigate the question of existence of a maximal
subgroup in the  multiplicative group of a division algebra finite dimensional
over its center. The general approach is to consider the
$K$-functor $\CK(A) = \text{coker}(\K(F) \rightarrow \K(A) )$ for
the central simple algebra $A=M_t(D)$ with center $F$.
Whenever $F$ is infinite, we have $\CK(A) \cong D^*\big/ F^{*t}D'$, where
$D'$~denotes the derived group of $D^*$.
The group $\CK(A)$ is
abelian  of bounded exponent and when it is nontrivial it
gives rise to (normal) maximal subgroups in $D^*$ (see Section
\ref{ck1}).  For
quaternion algebras $\mQ$ over euclidean fields, separate
treatment is required, since
we'll see that $\CK(M_t(\mQ))$
can be trivial for all $t$.

This  paper is organized  as follows: In Section
\ref{ck1} we examine the relation between the functor $\CK$ and
the maximal subgroups of multiplicative group of a division
algebra. We prove that for a quaternion division algebra $\mQ$,
$\CK(M_2(\mQ))$ is trivial if and only if ${\mQ = \oqt}$ where
$F$ is a euclidean field, if and only if
$\mQ^*$ has no normal maximal subgroup of index $2$.
Using valuation theory, we also
provide examples of non-normal maximal subgroups of finite index
in division algebras. Indeed, for any prime power $q$ we construct a
valued division algebra $D$  over a local field $F$
with maximal ideal $M_D$ such that
$D^*\big/F^*(1+M_D) \cong
\mathcal D_{q+1}$,
%% where
%% $D''=[D',D']$ and $\mathcal D_n$ is the
%% dihedral group of order $2n$.
the dihedral group of order $2(q+1)$.
In Section \ref{reducton} we  consider
division algebras with no maximal subgroups. We show that
the assumption of not having maximal subgroups in $D^*$  implies
very strong conditions on $D$ and on its center (Th.~
\ref{mainprop}). Finally in Section \ref{mainsec} we prove that every
quaternion division algebra  has a maximal subgroup, by reducing the
problem to the existence of a maximal subgroup in a quaternion
algebra over a euclidean field; we explicitly construct a
(non-normal) maximal subgroup in this case. By combining
Theorems~\ref{mainprop} and \ref{euclid} in
Sections~\ref{reducton}~and~\ref{mainsec}, we obtain:

\begin{theorem}\label{main}
%\noindent {\bf Theorem.} {\it 
Let $D$ be a division ring
finite-dimensional over its center $F$, and suppose $D^*$~has no
maximal subgroup. Then,
\begin{enumerate}
\item[(i)] If $\deg(D)$ is even, then $D\cong \oqt\otimes_F E$,
where $E$ is a nontrivial division algebra of odd degree, and $F$
is euclidean $($so $\mychar(F) = 0$$)$ with $F^{*2}$ divisible.
\item[(ii)] If $\deg(D)$ is odd, then $\mychar(F) > 0$,
$\mychar(F) \nmid\deg(D)$, and $F^*$ is divisible. \item[(iii)] In
either case, there is an odd prime $p$ dividing $\deg(D)$; for
each such $p$, we have $[F(\mu_p)\,{:}\,F] \ge 4$ $($so $p\ge
5${}$)$ and the $p$-torsion in $\Br(F)$ is generated by noncyclic algebras
of degree~$p$.
\end{enumerate}
%}
\end{theorem}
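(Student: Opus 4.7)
The strategy is to exploit the observation from Section~\ref{ck1} that any nontrivial element of $\CK(M_t(D))$ produces a (normal) maximal subgroup of~$D^*$, because $\CK$ is abelian of bounded exponent. The hypothesis that $D^*$ has no maximal subgroup therefore forces $\CK(M_t(D))=1$ for every $t\ge 1$. Wedderburn's little theorem guarantees that $F$ is infinite, so this identification reads $D^*=F^{*t}D'$ for every~$t$. Applying the reduced norm---which kills $D'$ and raises $F^*$ to the $n$-th power where $n=\deg D$---yields $\Nrd(D^*)=F^{*nt}$ for every~$t$; consequently $F^{*n}$ is a divisible subgroup of~$F^*$. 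This divisibility is the ultimate source of the $F^{*2}$-divisibility in~(i) and the $F^*$-divisibility in~(ii).

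\textbf{Eliminating $2$-power degrees.} I would next invoke Theorem~\ref{euclid}: every quaternion division algebra has a maximal subgroup in its multiplicative group, so $D$ itself cannot be a quaternion algebra. Combined with Cor.~2 of~\cite{kesh}---which applies without extra hypotheses when $p=2$, as $\mu_2\sub F$ is automatic---the possibility $\deg D=2^a$ is ruled out entirely. Hence $\deg D$ has an odd prime factor, already giving the initial clause of~(iii).

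\textbf{Primary decomposition and the case split.} Write the Brauer-primary decomposition $D\isom\bigotimes_p D_p$, with each $D_p$ a central division algebra of $p$-power degree. The key intermediate step is to descend the vanishing $\CK(M_t(D))=1$ to each factor, i.e.\ to prove $D_p^*=F^{*t}D_p'$ for all~$t$. I would approach this by combining the tensor factorization of~$D$ with the reduced-norm identity $\Nrd(x)=\Nrd(x)^{n/\deg D_p}$ for $x\in D_p^*\sub D^*$, and transporting the hypothesis through the mutually centralizing factors. Applying Cor.~2 of~\cite{kesh} to each $D_p$ with $p$ odd then forces $\mychar F>0$, $\mychar F\ne p$, and $\mu_p\not\sub F$. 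For $\deg D$ odd this yields~(ii), with full divisibility of~$F^*$ following from divisibility of~$F^{*n}$ once $\mychar F \nmid n$. For $\deg D$ even, the $2$-primary part~$D_2$ must be quaternion (otherwise the same Kesh-Mahdavi argument applies), and then the characterization in Section~\ref{ck1} of when $\CK(M_2(\mQ))=1$ simultaneously forces $D_2\isom\oqt$ and $F$ euclidean; since $D$ is not quaternion, the complementary factor $E=\bigotimes_{p\,\text{odd}}D_p$ is a nontrivial division algebra of odd degree, proving~(i).

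\textbf{Refining the condition on $\mu_p$.} The final and hardest task is upgrading $\mu_p\not\sub F$ to $[F(\mu_p){:}F]\ge 4$ and showing that the $p$-torsion of $\Br(F)$ is generated by noncyclic algebras of degree~$p$. I expect this to be the main obstacle. Both assertions amount to showing that no central simple algebra of degree~$p$ over~$F$ can be cyclic: if $A$ were cyclic with maximal subfield $L=F(\alpha)$ of degree~$p$, one would use the norm $N_{L/F}\colon L^*\to F^*$ together with a Kummer-theoretic presentation of~$A$ to construct a proper subgroup of $A^*$ of finite index, which in turn extends to a maximal subgroup of $D^*$ via the primary decomposition, contradicting the hypothesis. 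Controlling the index $[F(\mu_p){:}F]$ would reduce, via the Galois action of $\Gal(F(\mu_p)/F)$ on $\mu_p$, to ruling out the small cases $[F(\mu_p){:}F]\in\{1,2,3\}$ in which enough cyclicity is inherited to produce such a maximal subgroup; this then gives $[F(\mu_p){:}F]\ge 4$ and in particular $p\ge 5$.
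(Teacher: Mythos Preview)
Your plan has a real gap at the ``key intermediate step'': descending the vanishing of $\CK(M_t(D))$ to each primary component $D_p$. You propose to prove $D_p^*=F^{*t}D_p'$ from $D^*=F^{*t}D'$, but there is no natural surjection $D^*\to D_p^*$, and the inclusion $D_p^*\hookrightarrow D^*$ goes the wrong way for this purpose. The reduced-norm compatibility you mention only relates $\Nrd[D]$ and $\Nrd[D_p]$ on $D_p^*$; it says nothing about commutators, and in particular gives no control over $D_p'$ inside $D'$. Since your applications of Cor.~2 of~\cite{kesh} to the odd factors, your identification of $D_2$ as $\oqt$, and your argument for~(iii) all rest on this descent, the proof does not go through as written.

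The paper avoids descent entirely. From $D^*=F^{*k}D'$ it applies $\Nrd$ once to obtain $F^{*n}=F^{*nk}$ for all~$k$, and then works purely with $F$ and $\Br(F)$: an elementary abelian-group lemma converts this into $F^{*p^{r}}=F^{*p^{r+1}}$ for each $p\mid n$ and $F^*=F^{*q}$ for $q\nmid n$. For each odd prime $p\mid n$ one has ${}_p\Br(F)\ne 0$ (a tensor power of $D$ lies there), and then Merkurjev--Suslin forces $\mu_p\not\subseteq F$, hence $F^*=F^{*p}$; Merkurjev's theorem that ${}_p\Br(F)$ is generated by algebras of degree~$p$ then makes every such generator noncyclic (any cyclic $(L/F,\sigma,a)$ of degree $p$ is split since $a\in F^{*p}\subseteq N_{L/F}(L^*)$), and the bound $[F(\mu_p):F]\ge 4$ is quoted from the Corollary to Th.~1 of~\cite{merk}. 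For $p=2$ the same divisibility hypothesis combined with Merkurjev's theorem on quaternion generators yields that $F$ is euclidean and $\Br(F)(2)=\{F,\oqt\}$; the decomposition $D\cong\oqt\otimes_F E$ in~(i) then follows from the Brauer class of $D$, not from any statement about $D_2^*$. So the structural input you need for~(iii) is Merkurjev's theorems on $\Br(F)$, not an ad hoc construction of maximal subgroups in cyclic algebras of degree~$p$.
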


This result guarantees the existence of a  maximal subgroup for a
wide range of division algebras. In particular this covers all of
the cases of Cor.~ 2 of \cite{kesh} mentioned above. It also shows
that every  division algebra of degree
 $2^n$ or $3^n$, $n \geq 1$ has a maximal
subgroup.

Throughout this paper, all division rings are finite dimensional
over their centers, hence the use of the terminology division
algebras. By a maximal subgroup of a group we mean a proper
subgroup which is not contained in any other proper subgroup. A
normal maximal subgroup, is a maximal subgroup which is also
normal. 

Recall from the theory of ordered fields (cf.\ \cite{sch}, 
Ch.~3 or \cite{prestel})
that a field $F$ is said
to be {\it formally real} if $F$~admits an ordering, if and only
if $-1$ is not a sum of squares in $F$.  $F$~is said to be {\it
real pythagorean} if every sum of squares is a square in $F$ and
$-1\notin F^{*2}$.  $F$ is said to be {\it euclidean} if $F$ has
an ordering with respect to which every positive element is
a square. Clearly, if $F$ is euclidean then $F$ is real
pythagorean and $F^* = F^{*2} \cup - F^{*2}$, so $F^2 = F^4$.
 Furthermore,
since $\qt$ is split if $a\in F^{*2}$ or $b\in F^{*2}$ and
$\qt \cong \big(\frac{ac^2,bd^2}F\big)$ for any $c,d\in F^*$, the
only quaternion division algebra over a euclidean field $F$ is $\oqt$.

\section{The functor $\CK$ and its relation with   maximal
subgroups}\label{ck1}

Since we are interested in the existence of maximal subgroups,
we first recall what happens for an abelian group.

\begin{lemma}\label{ablemma}
Let $G$ be an abelian group. Then,
\begin{itemize}
\item[(i)]
$G$ has no maximal subgroups if
and only if $G$ is divisible, if and only if $G=G^p$
$($i.e., $G$ is $p$-divisible$)$ for every prime
number $p$.
\item[(ii)]
If $G$ is nontrivial and has bounded exponent, i.e., $G^n = 1$ for some $n$, then
$G$ is not divisible $($so has a maximal subgroup$)$.
\end{itemize}
\end{lemma}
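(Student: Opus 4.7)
The plan is to prove part~(i) by establishing a cycle of three implications and then to derive part~(ii) as a one-line corollary. All the tools needed are elementary: divisibility, quotient groups, and vector spaces over $\mathbb F_p$.

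First I would dispose of the second equivalence in~(i), namely that $G$ is divisible if and only if $G = G^p$ for every prime $p$. The forward direction is immediate from the definition. For the converse, given a positive integer $n$ with prime factorisation $n = p_1 p_2 \cdots p_k$ (with multiplicity), I argue iteratively: each $g \in G = G^{p_1}$ can be written $g = h_1^{p_1}$; applying $G = G^{p_2}$ to $h_1$ yields $g = h_2^{p_1 p_2}$; continuing through all the $p_i$ gives $g \in G^{p_1 p_2 \cdots p_k} = G^n$.

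Next, for the equivalence between ``having no maximal subgroups'' and being divisible, I would handle the two directions separately. If $G$ is not divisible, then $G \neq G^p$ for some prime~$p$, so the quotient $G/G^p$ is a nontrivial vector space over $\mathbb F_p$. Choosing a basis via Zorn's lemma, the $\mathbb F_p$-subspace spanned by all but one of the chosen basis vectors is a maximal subgroup of $G/G^p$, and its preimage under the canonical projection $G \to G/G^p$ is a maximal subgroup of $G$. Conversely, if $G$ is divisible and $H \subsetneq G$ is a maximal subgroup, then $G/H$ is a simple abelian group, hence cyclic of prime order~$p$; but any quotient of a divisible abelian group is itself divisible, contradicting $(G/H)^p = 1$ for a group of order~$p$.

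Part~(ii) then falls out at once: if $G$ is divisible and $G^n = 1$, the first equivalence in~(i) forces $G = G^n = \{1\}$, so a nontrivial group of bounded exponent can never be divisible, and hence by~(i) must admit a maximal subgroup. The only non-routine ingredient anywhere in the argument is the appeal to Zorn's lemma to produce a codimension-one subspace of an arbitrary $\mathbb F_p$-vector space; this is standard but unavoidable for groups of uncountable rank, and in every other respect the proof is purely formal.
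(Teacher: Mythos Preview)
Your proof is correct and follows essentially the same approach as the paper: both use the $\mathbb F_p$-vector space structure of $G/G^p$ to produce a maximal subgroup when $G\ne G^p$, and both observe that a maximal subgroup of an abelian group has prime index. Your argument for~(ii) is in fact slightly more direct than the paper's, which instead argues that a nontrivial $G$ with $G^n=1$ has an element of prime order~$p$, and divisibility would then force elements of arbitrarily high $p$-power order.
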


\begin{proof}
 (i) 
The first assertion of (i) is Exercise 1, p.~99 of \cite{fuchs}, and the 
second is (A) on p.~98 of \cite{fuchs}.  Here is the short proof:
If $G$ has a maximal subgroup $M$, then $G/M$ has no nontrivial subgroups,
so $|G/M| = p$ for some prime number $p$.  Then,
$G^p \subseteq M \subsetneqq G$, so $G$ is not $p$-divisible.  Conversely,
if $G \ne G^p$ for some
prime $p$, then $G/G^p$ is a nontrivial vector space over the field~
$\Z/p\Z$; so, $G/G^p$ has a maximal proper subspace, which pulls back
to a maximal subgroup of $G$.  The rest of (i) is clear.

(ii) Suppose $G$ is nontrivial and $G^n = 1$. Then, $G$ has an element of order $p$ for
some prime $p$ dividing $n$.  If $G$ were divisible, then $G$ would have an element
of order $p^m$ for every positive integer $m$. This cannot occur, as $G^n= 1$.
So, $G$ is not divisible.
\end{proof}

%% Since nontrivial torsion abelian groups of bounded
%% exponent are not divisible, such groups always have maximal
%% subgroups. We use this observation throughout this section.

There are several ways to attempt to construct (normal) maximal
subgroups for a finite dimensional division algebra $D$ with
center $F$ of degree $n$. Consider the central simple 
matrix algebra
$A=M_t(D)$ where $t$ is a positive integer. The $K$-group
$\CK(A)$ is then defined as
$$
\CK(A) \ =  \ \text{coker}(\K(F) \longrightarrow
\K(A) ).
$$
By Th.~4 (iii), p.~138 of \cite{draxl}, if $A\not \cong M_2(\Bbb
F_2)$ then $K_1(A) \cong K_1(D)$ via the Dieudonn\'e determinant.
Since the Dieudonn\'e determinant is the $t$-power map on the copy of
$F^*$ in $A$, whenever $D$ is noncommutative we have,
\begin{equation}\label{CKformula}
\CK(A)  \
%% \cong \GL[t](D) \big / F^*I_tE_t(D)
\cong \ D^*/{F^*}^tD'
\end{equation}
 where $D^*$ is the multiplicative group of $D$ and
$D'$ the derived subgroup of $D^*$. Thus $\CK (D)$~is a factor group of
the group $\CK (A)$.
Now, for any $x\in D^*$,  
$x^{-n}\Nrd(x) \in D^{(1)}$ where $\Nrd$ is the reduced norm and
$D^{(1)}=\{d \in D^* \mid \Nrd(d)=1\}$.  Since, further, the reduced Whitehead
group $\SK(D)=D^{(1)}/D'$ is $n$-torsion (by \cite{draxl}, p.~157,
Lemma~2), it follows from \eqref{CKformula}
that $\CK(D)$ is an abelian group of bounded 
exponent $n^2$. (In fact one can show that the bound can be
reduced to $n$, see the proof of Lemma 4, p.~154 in \cite{draxl}
or pp.~579--580 in~\cite{sk1}.) It thus follows
from \eqref{CKformula} that $\CK(M_t(D))$ is
an abelian group of bounded exponent~$tn^2$.
%The description of abelian groups of bounded exponent is known and
%it follows that $\CK(M_t(D))\cong \bigoplus \Bbb Z_{k_i}$ where
%each $k_i \mid tn^2$ (see \cite{robinson}, p.~105).
Therefore, if there is a $t$ such that $\CK(M_t(D))$ is nontrivial,
then it  has a normal maximal subgroup by Lemma~\ref{ablemma}(ii);
then,  $D^*$ has a  normal maximal
subgroup. In \cite{hmm} it was conjectured that if $\CK(D)$ is
trivial then $D$ is a quaternion algebra. In \cite{vishne}, in an
attempt to prove this conjecture, it was shown that if $D$ is a
tensor product of cyclic algebras then $\CK(D)$ is trivial if and
only if $D $ is the ordinary quaternion algebra~$\oqt$ over a real
pythagorean field $F$. The non-triviality of the group $\CK$ and other
factor groups of $D^*$ ``close'' to $\CK$ has been studied in
\cite{nk1,vishne,sk1,kesh}.

 There are other ways to deduce that $D^*$ has a
maximal subgroup. For example, if there exists a surjective
homomorphism from $F^*$ to a torsion-free (abelian) group $\Gamma$
such that $\Gamma$~has a maximal subgroup, then one can conclude
that $D^*$ has a  (normal) maximal subgroup. Indeed, let $v\colon
F^* \rightarrow \Gamma$ be a surjective homomorphism. We only need
to consider the case when $\CK(D)$ is trivial, i.e., $D^*=F^*D'$.
Define $w\colon D^* \rightarrow \Gamma$ by $w(d)=v(f)$ where
$d=fd'$, $f \in F^*$ and $d' \in D'$. If $ a \in D' \cap F^* $, then
$1=\Nrd(a)=a^{\deg(D)}$. Since $D'\cap F^*$ is finite, thus a torsion
group, while $\Gamma$ is torsion-free, it follows that $D' \cap F^*
\subseteq \ker(v)$ and that $w$ is a well-defined surjective
homomorphism.  Since $\Gamma$ has a maximal subgroup, it follows
that $D^*$~has a maximal subgroup. From this it follows that if the
center of a division algebra $D$ has a valuation with value group
$\mathbb Z^n$ then $D^*$ has a normal maximal subgroup.  (The case
of this with a discrete rank $1$ valuation is Cor.~8 of
\cite{akbari}.)

The approaches just described  always produce
 normal maximal subgroups of $D^*$
(so subgroups containing~$D'$). However, there exist division algebras with
non-normal maximal subgroups in their multiplicative groups (see
Example~\ref{normal} and Th.~\ref{euclid} below).

The observations above about $\CK$ reduce the question of existence of a
maximal subgroup to consideration of the case when $\CK(M_t(D))$ is
trivial for every
positive integer $t$. In fact we have the following:

\begin{proposition}\label{nonormal} 
Let $D$ be a division algebra with  center $F$. Then the
following are equivalent:
\begin{itemize}
\item[(i)]
$D^*$ has no normal maximal subgroup.
\item[(ii)]
$\CK(M_t(D))=1$ for every positive integer $t$.
 \item[(iii)] $\CK(M_p(D))=1$ for every prime $p$.
\end{itemize}
\end{proposition}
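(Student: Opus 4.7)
\emph{Proof proposal.} The plan is to work at the level of the abelianization $G = D^*/D'$, letting $H$ denote the image of $F^*$ in $G$. By formula~\eqref{CKformula} we have $\CK(M_t(D)) \isom G/H^t$ for every positive integer~$t$. (I will assume $D$ is noncommutative, so $F$ is infinite and \eqref{CKformula} applies; the commutative case is just Lemma~\ref{ablemma} applied to $F^*$.)

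The key step is a dictionary between normal maximal subgroups of $D^*$ and maximal subgroups of the abelian group~$G$. If $M$ is a normal maximal subgroup of $D^*$, then by maximality $D^*/M$ has no proper nontrivial subgroup, hence is cyclic of prime order; in particular $M \supseteq D'$, so $M/D'$ is a maximal subgroup of~$G$. Conversely, the preimage in $D^*$ of any maximal subgroup of $G$ is a normal maximal subgroup of $D^*$. This identifies statement~(i) with the assertion that $G$ has no maximal subgroup, which by Lemma~\ref{ablemma}(i) is equivalent to $G$ being divisible.

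Given the dictionary, the implications close up quickly. Implication (ii) $\Rightarrow$ (iii) is trivial. For (iii) $\Rightarrow$ (i): from $G = H^p$ for every prime~$p$, taking $p = 2$ forces $G = H^2 \subseteq H \subseteq G$, hence $G = H$; then $G = H^p = G^p$ for every prime~$p$, and Lemma~\ref{ablemma}(i) makes $G$ divisible. For (i) $\Rightarrow$ (ii): if $G$ is divisible but $\CK(M_t(D)) \isom G/H^t$ were nontrivial for some~$t$, then since this quotient has bounded exponent $tn^2$ (as noted in the discussion preceding \eqref{CKformula}), Lemma~\ref{ablemma}(ii) would produce a maximal subgroup in it, whose preimage in $G$ would be a maximal subgroup of~$G$, contradicting divisibility.

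I do not expect a serious obstacle: everything is driven by the bounded-exponent property of $\CK(M_t(D))$ combined with Lemma~\ref{ablemma}. The only mildly clever point is noticing that taking $p = 2$ in (iii) already yields $\CK(D) = G/H = 1$, which collapses the two separate assertions ``$G = H$'' and ``$G = G^t$'' into the single statement that every $\CK(M_t(D))$ vanishes.
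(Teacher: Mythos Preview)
Your proof is correct and follows essentially the same approach as the paper: both arguments rest on the bounded exponent of $\CK(M_t(D))$ together with Lemma~\ref{ablemma}, and both identify normal maximal subgroups of $D^*$ with index-$p$ subgroups containing $D'$. The only organizational difference is that the paper proves (iii)~$\Rightarrow$~(i) by contrapositive (a normal maximal subgroup of index $p$ contains $F^{*p}D'$, so $\CK(M_p(D))\ne 1$), whereas you argue directly via the observation $G = H^2 \subseteq H$ to get $G = H$ and hence $G = G^p$ for all primes~$p$.
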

\begin{proof}
(i) $\Rightarrow$ (ii). If $\CK(M_t(D))$ is nontrivial for a
positive integer $t$, then, as pointed out above,  $D^*$ has
a nontrivial  abelian factor group of bounded exponent;
so $D^*$ has normal maximal subgroup by Lemma~\ref{ablemma}(ii).

(ii) $\Rightarrow$ (iii). Clear.

(iii) $\Rightarrow$ (i) (contrapositive). If $D^*$ has a normal maximal subgroup
$N$, then $D^*/N$ is a group with no nontrivial
subgroups; thus, $D^*/N \cong \Bbb Z/p\Z$ for some prime number $p$.
It then follows that ${F^*}^pD' \subseteq N$, so,
$\CK(M_p(D))$ is nontrivial (see \eqref{CKformula}).
\end{proof}

In Section \ref{reducton} below we will see that the
equivalent conditions
on a division algebra $D$ given in Prop.~\ref{nonormal}
 yield very strong constraints on $D$ and on its center.

While $\CK(D)$ is generally quite difficult to compute,
there is a very explicit description of $\CK(\mQ)$ for a
quaternion algebra $\mQ$, which allows us to determine
when $\mQ^*$ has a normal maximal subgroup. Recall that if
$\mQ$ is a quaternion algebra over a field $F$ with $\mychar(F)
\ne 2$, then for some $a,b\in F^*$, $\mQ \cong \qt$, where
$\qt$ denotes the quaternion algebra over $F$ with \mbox{$F$-base}
$\{\, 1, \qi, \qj, \qk\,\}$ satisfying $\qi^2 = a$, $\qj^2 = b$,
and $\qk = \qi\qj = -\qj \qi$.  For any ${x = r+ s\qi + t\qj+ u\qk
\in \qt}$ (with $r,s,t,u\in F$), the reduced norm of $x$ is given by
\begin{equation}\label{quatnorm}
\Nrd(x) \ = \ r^2 -as^2 - bt^2 +abu^2\,.
\end{equation}
Note that if $b\in F^{*2}$, then the quaternion algebra is split.
If $\mychar(F) = 2$, then every quaternion algebra over $F$ has the
form $\left[\frac{c,b}F\right)$ for $c\in F$ , $b\in F^*$;
this is the $F$-algebra with $F$-base $\{\, 1, \qi, \qj, \qk\,\}$
satisfying $\qi^2 - \qi = c$, $\qj^2 = b$, and $\qk =\qi\qj = \qj \qi + \qj$.
Here again, if $b \in F^{*2}$ the quaternion algebra is split.

\begin{lemma}\label{quat}
Let
%% $F$ be a field with $\mychar(F) \ne 2$, and let
$\mQ$ be a quaternion division algebra
over a field $F$.
Then,
$ \mQ^* \big/ \mQ' \cong \Nrd(\mQ^*)$
%% \ = \
%%  \{ \, r^2 - as^2 -bt^2 +abu^2\,|\ r,s,t,u \in F\,\}\ \backslash  \ \{0\}
and, for every $t$,
\begin{equation}\label{CKquat}
\CK(M_t(\mQ))   \ \cong \ \mQ^*\big/F^{*t}\mQ'
\ \cong \ \Nrd(\mQ^*) \big/ F^{*2t}\,.
\end{equation}
\end{lemma}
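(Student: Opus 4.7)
The plan is to reduce both isomorphisms to the classical fact that the reduced Whitehead group $\SK(\mQ)$ is trivial for a quaternion algebra, combined with \eqref{CKformula} and the observation that the reduced norm restricted to the center is squaring.

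First I would establish the first isomorphism $\mQ^*/\mQ' \cong \Nrd(\mQ^*)$. By definition, $\Nrd\colon \mQ^* \to F^*$ is a group homomorphism with image $\Nrd(\mQ^*)$ and kernel $\mQ^{(1)}$. Since $\mQ'$ is contained in $\mQ^{(1)}$ (as the reduced norm lands in the abelian group $F^*$), the reduced norm induces a surjection $\mQ^*/\mQ' \twoheadrightarrow \Nrd(\mQ^*)$ whose kernel is $\mQ^{(1)}/\mQ' = \SK(\mQ)$. The key input is then that $\SK(\mQ) = 1$ for any quaternion algebra $\mQ$; this is a special case of Wang's theorem for central simple algebras of squarefree index (see \cite{draxl}, \S23). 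Granting this, $\mQ^{(1)} = \mQ'$ and the induced surjection is an isomorphism.

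Next I would verify \eqref{CKquat}. The first isomorphism $\CK(M_t(\mQ)) \cong \mQ^*/F^{*t}\mQ'$ is immediate from \eqref{CKformula}, as $\mQ$ is noncommutative. For the second isomorphism, consider again the reduced norm, now regarded as a map $\mQ^* \to F^*$. For $f \in F^*$ viewed as a central scalar in $\mQ$, formula \eqref{quatnorm} (or a direct calculation in characteristic $2$) gives $\Nrd(f) = f^2$, so $\Nrd(F^{*t}) = F^{*2t}$. Since $\Nrd(\mQ') = 1$, this induces a surjection $\mQ^*/F^{*t}\mQ' \twoheadrightarrow \Nrd(\mQ^*)/F^{*2t}$. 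To see injectivity, suppose $x \in \mQ^*$ satisfies $\Nrd(x) \in F^{*2t}$; write $\Nrd(x) = f^{2t} = \Nrd(f^t)$ for some $f \in F^*$, so that $xf^{-t} \in \mQ^{(1)} = \mQ'$, and hence $x \in F^{*t}\mQ'$.

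The main obstacle is really just invoking the nontriviality-of-$\SK$ story in the correct form; everything else is a straightforward diagram chase using that $\Nrd$ restricts to squaring on $F^*$. Since $\SK(\mQ) = 1$ for quaternion algebras is well documented in \cite{draxl}, the proof should be short.
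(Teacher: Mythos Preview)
Your proof is correct and follows essentially the same route as the paper: both invoke $\SK(\mQ)=1$ for quaternion algebras (from \cite{draxl}) to get $\mQ^*/\mQ'\cong\Nrd(\mQ^*)$, then use \eqref{CKformula} together with $\Nrd(F^{*t})=F^{*2t}$ to obtain \eqref{CKquat}. You spell out the injectivity step for the second isomorphism more explicitly than the paper does, but the argument is the same; note the small slip where you write ``nontriviality-of-$\SK$'' when you mean triviality.
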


\begin{proof}  The first isomorphism is given in \eqref{CKformula}
above. For the second, recall that 
%% By definition, 
$\SK(\mQ) = \mQ^{(1)}\big/ \mQ'$, where $\mQ^{(1)}
= \ker(\Nrd)$.  Since $\mQ$ is a quaternion algebra, it is
known that $\SK(\mQ)$~is trivial, see Th.~1, p.~161 in \cite{draxl}.
(In fact, every element of $\mQ^{(1)}$ is a commutator.) Consequently,
$\mQ^* \big/ \mQ' \cong \Nrd(\mQ^*)$.  Since $\Nrd(F^{*t}) = F^{*2t}$,
it follows that ${\mQ^*\big/F^{*t}\mQ' \cong \Nrd(\mQ^*) \big/ F^{*2t}}$.
%%The rest of \eqref{CKquat} is given in  \eqref{CKformula}.
\end{proof}

%% The following proposition shows, for a quaternion division algebra
%% $D$ one has quite a precise description when one puts restriction
%%  on the existence of maximal subgroups.

\begin{proposition}\label{euclidean}
Let $\mQ$ be a quaternion division algebra with center $F$. Then
the following are equivalent:
\begin{itemize}
\item[(i)]
$\mQ^*$ has no  subgroup of index $2$.
\item[(ii)]
The group $\CK(M_2(\mQ))$ is trivial.
\item[(iii)]
$F$ is a euclidean field and $\mQ\cong \oqt$.
%% \item[(iv)]
%% The square class group $\mQ^*/{\mQ^*}^2$ is trivial.
\end{itemize}
\end{proposition}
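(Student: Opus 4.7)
The plan is to prove (i) $\Leftrightarrow$ (ii) group-theoretically and to reduce (ii) $\Leftrightarrow$ (iii) to the identification $\CK(M_2(\mQ)) \cong \Nrd(\mQ^*)/F^{*4}$ provided by Lemma~\ref{quat}. The core of the argument will be (ii) $\Rightarrow$ (iii).

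For (i) $\Leftrightarrow$ (ii), any subgroup $H$ of $\mQ^*$ of index $2$ must contain $\mQ'$ (its quotient is abelian) and $F^{*2}$ (its quotient has exponent $2$), hence contains $F^{*2}\mQ'$; so subgroups of $\mQ^*$ of index $2$ correspond bijectively to subgroups of $\CK(M_2(\mQ))$ of index~$2$. Since $\CK(M_2(\mQ))$ is abelian of exponent dividing $4$, Lemma~\ref{ablemma}(ii) says it is trivial if and only if it has no maximal subgroup, and any such maximal subgroup has index a prime dividing~$4$, i.e., index~$2$.

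For (iii) $\Rightarrow$ (ii), when $F$ is euclidean the positive cone of the unique ordering is $F^{*2}$ and coincides with $F^{*4}$ (a positive element is the fourth power of its positive fourth root). The reduced norm on $\oqt$ is the sum of four squares, whose nonzero values are all positive and hence lie in $F^{*2}=F^{*4}$; combined with the trivial inclusion $F^{*2} \subseteq \Nrd(\oqt^*)$ this yields $\Nrd(\mQ^*) = F^{*4}$, so $\CK(M_2(\mQ))=1$.

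For (ii) $\Rightarrow$ (iii), suppose $\Nrd(\mQ^*) = F^{*4}$. Since $\Nrd(F^*) = F^{*2}$ we get $F^{*2} \subseteq F^{*4}$, so $F^{*2} = F^{*4}$. I first rule out $\mychar(F)=2$: in that case the Frobenius is injective, so $F^{*2}=F^{*4}$ forces $F=F^2$, making every element of $F^*$ a square, which is incompatible with $\mQ$ being a quaternion division algebra. So $\mychar(F)\neq 2$, and writing $\mQ = \qt$, from $-a = \Nrd(\qi) \in F^{*4}$ and $-b = \Nrd(\qj) \in F^{*4}$ I write $-a=c^4$, $-b=d^4$ and replace $\qi,\qj$ by $c^{-2}\qi,d^{-2}\qj$ to get $\mQ\cong\oqt$. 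Since $\oqt$ is a division algebra, $-1 \notin F^{*2}$. Every nonzero sum of two squares equals $\Nrd(r+s\qi) \in F^{*4} \subseteq F^{*2}$ and is thus itself a square; inducting on the number of summands shows $F$ is pythagorean, and hence real pythagorean. The equality $F^{*2}=F^{*4}$ then forces every $x\in F^*$ to satisfy $x^2=y^4$ for some $y$, so $x=\pm y^2$ and $F^*=F^{*2}\cup(-F^{*2})$; together with the real pythagorean property this makes $F$ euclidean. I expect the main obstacle to lie here, where one has to extract both $\mQ\cong\oqt$ and the euclidean property from the single identity $\Nrd(\mQ^*)=F^{*4}$; the conceptual key is that $F^{*2}=F^{*4}$ is secretly the statement that every element of $F^*$ is $\pm$ a square.
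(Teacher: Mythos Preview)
Your proof is correct and follows essentially the same route as the paper: both use Lemma~\ref{quat} to identify $\CK(M_2(\mQ))$ with $\Nrd(\mQ^*)/F^{*4}$, both extract $F^{*2}=F^{*4}$ and rule out characteristic~$2$ via perfectness, and both obtain $F^*=F^{*2}\cup -F^{*2}$ from $F^{*2}=F^{*4}$ to conclude euclidean. The only organizational difference is that you first rescale the generators to get $\mQ\cong\oqt$ and then deduce $-1\notin F^{*2}$ and the pythagorean property, whereas the paper first establishes that $F$ is euclidean and then invokes the remark from the introduction that $\oqt$ is the unique quaternion division algebra over a euclidean field; the content is the same.
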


\begin{proof}
(i) $\Rightarrow$ (ii) (contrapositive).  As noted above
(and explicitly clear from Lemma~\ref{quat}), $\CK(M_2(\mQ))$
is a $4$-torsion abelian group.  If $\CK(M_2(\mQ))$ is nontrivial,
then by Lemma~\ref{ablemma}
it has a maximal subgroup  $N$, which
is necessarily normal and of prime index, say $p$.  Since
$\CK(M_2(\mQ))\big/N$ is $4$-torsion, we must have $p = 2$.  Thus,
the inverse image of $N$ in $\mQ^*$ has index $2$ in $\mQ^*$.

(ii) $\Rightarrow$ (iii).  Suppose $\CK(M_2(\mQ))$ is trivial.  Then,
$\Nrd(Q^*) = F^{*4}$ by Lemma~\ref{quat}.  Since $F^{*2} = \Nrd(F^*)
\subseteq \Nrd(\mQ^*) = F^{*4}$, we have $F^{*2} = F^{*4}$. If $\mychar(F)
= 2$, then ${F = (F^2)^{1/2} = (F^4)^{1/2}} = F^2$,
i.e., $F$ is perfect.
But, since $\mQ \cong \left[ \frac{c,b}F\right)$ and $b\in F^* = F^{*2}$,
$\mQ$ is split.  This cannot occur since $\mQ$ is assumed to be a division
algebra. Hence, $\mychar(F) \ne 2$, so  $\mQ \cong \qt$ for some
$a,b\in F^*$. Since $\Nrd(\mQ^*) = F^{*2}$, formula~\eqref{quatnorm}
shows that $-a, -b \in F^{*2}$ and every sum of squares in $F$ is a
square.  Also, $-1\notin F^{*2}$, since otherwise $b = (-1)(-b) \in
F^{*2}$  and $\mQ$~would be split.  Hence, $F$ is real pythagorean.
Because $F^{*4} = F^{*2}$, for every $c\in F^*$ there is $d\in F^*$
with $c^2 = d^4$; then $c = \pm d^2$.  So, $F^* = F^{*2} \cup - F^{*2}$
(a disjoint union).  This shows that every positive element of $F$ with
respect to any ordering must be a square.  So, $F$ is euclidean.
Therefore, as noted above, $\mQ \cong \oqt$.

(iii) $\Rightarrow$ (ii). Suppose $F$ is euclidean, so $\mQ\cong \oqt$.
Then, by the reduced norm formula~\eqref{quatnorm}, $\Nrd(\mQ^*) = F^{*2}
= F^{*4}$, as $F$ is euclidean.  Hence, $\CK(M_2(\mQ))$ is trivial, by
Lemma~\ref{quat}.

(ii) $\Rightarrow$ (i) (contrapositive).  Suppose $\mQ^*$ has a subgroup
$H$ of index $2$.  Then, $H$ is normal in $\mQ^*$ with $\mQ^*\big /H
\cong \Z/2\Z$. Hence, $\mQ'\subset H$ and $F^{*2} \subseteq H$.  Therefore,
$F^{*2}\mQ' \subseteq H\subsetneqq \mQ^*$, so  $\CK(M_2(\mQ))$ is
nontrivial.
\end{proof}

In Section \ref{mainsec} we will show that the quaternion division
algebra over a euclidean field always has (non-normal) maximal
subgroups and thus conclude that every quaternion division algebra
over any field has a maximal subgroup. For the moment, we will
describe exactly when a quaternion division algebra has a normal
maximal subgroup.  This will enable us to give examples of
quaternion division algebra over certain euclidean fields which have
normal maximal subgroups (necessarily of odd prime index, by
Prop.~\ref{euclidean}).

\begin{proposition}\label{euclidnonormal}
Let $\mQ$ be a quaternion division algebra with center $F$.
For any odd prime~$p$, if $F^* \ne F^{*p}$, then
$\mQ^*$ has a normal maximal subgroup of index $p$ or $2$.
Hence,
$\mQ^*$ has no normal maximal subgroup if and only if  $F$ is
euclidean and $F^* = {F^*}^p$ for every odd prime $p$.
\end{proposition}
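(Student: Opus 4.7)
The plan is to derive the first assertion from the formula $\CK(M_p(\mQ)) \cong \Nrd(\mQ^*)\big/F^{*2p}$ of Lemma~\ref{quat} together with the nontriviality criterion of Lemma~\ref{ablemma}(ii); the ``hence'' equivalence will then fall out by combining this with Propositions~\ref{nonormal} and~\ref{euclidean}.

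For the first assertion, fix an odd prime $p$ with $F^*\ne F^{*p}$. My first step is a purely multiplicative observation: for odd~$p$, $F^{*2} = F^{*2p}$ is equivalent to $F^* = F^{*p}$. One direction is immediate; for the other, if $x^2 = y^{2p}$ then $x = \pm y^p$, and since $-1 = (-1)^p$ for odd~$p$ the sign is itself a $p$-th power. Contrapositively, the hypothesis forces $F^{*2} \ne F^{*2p}$. Since $\Nrd$ restricts to squaring on $F^*$, we have $F^{*2} \subseteq \Nrd(\mQ^*)$, so $\CK(M_p(\mQ))$ contains the nontrivial subgroup $F^{*2}\big/F^{*2p}$.

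Next I would observe that $\CK(M_p(\mQ))$ is abelian of exponent dividing $2p$, since every element of $\Nrd(\mQ^*)\subseteq F^*$ raised to the $2p$-th power lies in $F^{*2p}$. Lemma~\ref{ablemma}(ii) then yields a maximal subgroup; being abelian, it is normal of prime index $q$ with $q \mid 2p$, so $q\in\{2,p\}$. Pulling this subgroup back under the canonical surjection $\mQ^* \twoheadrightarrow \mQ^*\big/F^{*p}\mQ' \cong \CK(M_p(\mQ))$ yields a normal maximal subgroup of $\mQ^*$ of index $p$ or~$2$, as required.

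For the equivalence, the forward direction is now immediate: if $\mQ^*$ has no normal maximal subgroup, then the contrapositive of the first assertion forces $F^* = F^{*p}$ for every odd prime~$p$, while the absence of a (necessarily normal) subgroup of index~$2$ together with Proposition~\ref{euclidean} forces $F$ euclidean with $\mQ \cong \oqt$. Conversely, assuming $F$ euclidean and $F^* = F^{*p}$ for every odd prime~$p$, I would invoke Proposition~\ref{nonormal} and check $\CK(M_q(\mQ)) = 1$ prime by prime: for $q=2$ this is Proposition~\ref{euclidean}, while for odd $q$ we have $\Nrd(\mQ^*) = F^{*2}$ (since $F$ is euclidean) and $F^{*2q} = F^{*2}$ (since $F^* = F^{*q}$), so $\CK(M_q(\mQ)) \cong F^{*2}\big/F^{*2q}$ is trivial. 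The only real subtlety is the equivalence $F^{*2} = F^{*2p} \Leftrightarrow F^* = F^{*p}$ that lets the odd-prime hypothesis survive passage through the reduced norm; everything else is bookkeeping on the two preceding propositions.
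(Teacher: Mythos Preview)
Your proof is correct, but the route differs from the paper's in a way worth noting. For the first assertion, the paper immediately disposes of the case where $\mQ^*$ already has an index-$2$ subgroup, and then works entirely in the remaining (euclidean) case: there it proves the clean structural fact that $\mQ^*/\mQ' \cong F^{*2}$ via the isomorphism \eqref{iso}, using the ordering to show $F^{*2}\cap \mQ' = \{1\}$, and then reads off directly that $\mQ^*$ has a normal maximal subgroup of index~$p$ iff $F^{*2}\ne F^{*2p}$ iff $F^*\ne F^{*p}$. You instead stay with $\CK(M_p(\mQ))\cong \Nrd(\mQ^*)/F^{*2p}$ for the given odd~$p$, exhibit the nontrivial subgroup $F^{*2}/F^{*2p}$ inside it, and invoke the bounded-exponent lemma to extract a maximal subgroup of index~$2$ or~$p$. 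Your argument is more self-contained for the first assertion (it does not need the preliminary reduction to the euclidean case), while the paper's argument yields the sharper intermediate result $\mQ^*/\mQ'\cong F^{*2}$ and in fact pins down exactly when the index is~$p$ rather than merely $p$ or~$2$. For the ``hence'' equivalence the two arguments essentially coincide, both running through Propositions~\ref{nonormal} and~\ref{euclidean}. The equivalence $F^{*2}=F^{*2p}\Leftrightarrow F^*=F^{*p}$ for odd~$p$ that you isolate is used (tacitly) in the paper as well.
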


\begin{proof}
Suppose $\mQ$ has no subgroup of index $2$. Prop.~\ref{euclidean}
shows that this occurs iff $F$ is eucidean.  Also by
Prop.~\ref{euclidean},
%% Then, by
%% Prop.~\ref{euclidean} $\mQ$ is euclidean and
$\CK(M_2(\mQ))$
is trivial, so $\mQ^* = F^{*2}\mQ'$, by \eqref{CKformula}. Hence,
\begin{equation}\label{iso}
\mQ^* \big/ \mQ' \ = \ F^{*2}\mQ'\big / \mQ'  \ \cong \  F^{*2}\big/
(F^{*2} \cap \mQ') \,.
\end{equation}
If $a\in F^{*2} \cap \mQ'$, then $a>0$ and $a^2 = \Nrd(a) = 1$; so, $F^{*2} \cap \mQ'
= \{1\}$. 
As noted previously, a normal maximal subgroup of $\mQ^*$ has prime
index and contains $\mQ'$. Thus, if $p$ is any odd prime, $\mQ^*$~has a
normal maximal subgroup iff $F^{*2}$ has a maximal subgroup of index $p$
iff $F^{*2} \ne F^{*2p}$ (see Lemma~\ref{ablemma}), iff $F^* \ne F^{*p}$.
\end{proof}

In the next two examples we will work with valued division algebras $D$.
A valuation on~$D$ is an epimorphism $v
\colon D^* \rightarrow \Gamma_D$, where
$\Gamma_D$ is a totally ordered abelian group, such that if
$v(a)\geq 0$ then $v(a+1) \geq 0$ (this is equivalent to the
traditional definition). Let $V_D={\{a\in D^* \mid v(a) \geq 0\}
\cup \{0\}}$, the valuation ring of $D$, and
let  ${M_D=\{a\in D^* \mid v(a) > 0\}
\cup \{0\}}$, the unique maximal left and right ideal of $V_D$. Thus
$\overline D= V_D/M_D$ is a division ring called the residue
division ring, and $U_D=V_D^*=V_D \backslash M_D$ is the group
of valuation units.
The restriction of $v$ to $F = Z(D)$ induces a valuation on $F$ and gives
the corresponding structures $V_F, M_F,U_F,\overline F$ and
$\Gamma_F$. (For a survey of valued division algebras see
\cite{wadval}.)

Prop.~\ref{euclidnonormal} shows that the multiplicative group of
Hamilton's quaternion division algebra~$(\frac{-1,-1}{\Bbb R})$
has no normal maximal subgroup.
The next example shows that the quaternion
division algebra $\mQ = \oqt$ over a euclidean field $F$
can have normal
maximal subgroups (of odd index, by Prop.~\ref{euclidean}),
i.e., by Prop.~\ref{nonormal}, there is a positive integer $t>2$ 
such that $\CK(M_t(\mQ))$ is
nontrivial (recall that here ${\CK(M_2(\mQ))=1}$).

\begin{example}\label{maxeuc} 
Let $K$ be any field with an ordering $<$, and let $R$ be a real
closure of $K$ with respect to $<$; let $<$ denote also the unique
ordering on $R$.  Let $F$ be the euclidean hull of $K$ in $R$.  That
is, $F = \bigcup\limits_{i = 0}^\infty L_i$, where $L_0 = K$ and
for each $i\ge 0$, ${L_{i+1} = L_i\big(\{\sqrt c \,|\ c\in L_i, \
c>0\,\}\big) \subseteq R}$.  By construction, the ordering on $R$
restricts to  an ordering on $F$ in which each positive element
of $F$ is a square; so, $F$ is euclidean.  Take any odd prime $p$.
Let $E$ be any quadratic extension field of $K$.  The composition of
maps $K^*\big/K^{*p} \to E^*\big/E^{*p}
\stackrel{N}\rightarrow  K^*\big/K^{*p}$ (where $N$~is induced by
the norm $N_{E/F}$) is the squaring map, which is an isomorphism
 as $p$ is odd.   Hence, the map $K^*\big/K^{*p} \to E^*\big/E^{*p}$
is injective.  Thus, the map $K^*\big/K^{*p} \to F^*\big/F^{*p}$
is  an injection, as $F$~is the direct limit of iterated quadratic
extensions of $K$.  Therefore, whenever $K^{*p} \ne K^*$
the quaternion division algebra $\oqt$ over our euclidean field $F$
has a normal maximal subgroup of index $p$.  For example, when
$K = \Q$, the field $F$ is the field of constructible numbers, in the
sense of compass and straightedge constructions, and $\oqt$
has a normal maximal subgroup of index $p$ for every  odd prime
$p$.  For another example, let $K = \Bbb R((x))$, the Laurent series
field in one variable over the real numbers $\Bbb R$.  Then,
with respect to the ordering on $K$ with $x>0$, the euclidean hull is
$F = K(\{\,\root {2^n}\of x\,| \ n = 1,2, \ldots\,\})$; this $F$
has a Henselian (but not complete) valuation induced by the
$x$-adic valuation on~$F$, with value group $\Gamma_F$ isomorphic
to the additive group of the ring $\Z[1/2]$ and 
residue field~$\ol F \cong\R$.
For every odd prime $p$, we have $F^*\big/F^{*p} \cong \Gamma_F\big/
p\Gamma_F \cong \Z/p\Z$.  The valuation on $F$ extends uniquely
to a valuation $v$ on $\mQ = \oqt$ with   $\ol\mQ \cong \left(\frac
{-1,-1}{\R}\right)$ and $\Gamma_\mQ = \Gamma_F$. For each odd
prime $p$, $\{\, a\in \mQ^*\,| \  v(a) \in p\Gamma_{\mQ} \,\}$ is
the unique normal subgroup of $\mQ$ of index $p$, and these are all the
normal maximal subgroups of $\mQ^*$.

In each of these examples, $\CK(\mQ)$ and $\CK(M_2(\mQ))$ are
trivial by Prop.~\ref{euclidean}, but $\CK(M_3(\mQ))$ is nontrivial
 by the proof
of Prop.~\ref{nonormal},
as $\mQ^*$ has a normal maximal subgroup of index $3$.

\end{example}

%% In the rest of this section we will 
We next give examples of
division algebras with non-normal maximal subgroups of finite
index.
%% Again, the approach is to analyse certain  factor groups of
%% $D^*$ of bounded exponent.  similar to $\CK$.

\begin{example} \label{normal} Let $q$ be
any prime power. We construct a division algebra $D$ with center
a local field $F$ such that
\begin{displaymath}
{D^*}\big/ {F^*(1+M_D})\ \cong  \ \mathcal D_{q+1}.
\end{displaymath}
Here $\mathcal D_{q+1}$ is the dihedral group with $2(q+1)$
elements,
%% where $q$ is an odd prime power,
and
%% $D''=[D',D']$.  where
$M_D$ is the maximal ideal of the valuation ring of $D$.
Note that for any $n
>2$,  the dihedral group $\mathcal D_n$ has nonnormal maximal subgroups of
index $p$ for each odd prime $p$ dividing $n$ (and these are the
only nonnormal maximal subgroups). It thus follows that
for each odd prime $p$ dividing $q+1$
there is a
maximal subgroup $H$ in $D^*$ of index $p$ such that $F^*(1+M_D)
\subseteq H$ but $H$ is not normal in~$D^*$.
%% as this is the case for the dihedral group.

For this example, we first observe an
exact sequence, \eqref{localsequence} below, relating a homomorphic 
image of
$D^*$ to value group and residue data.  The sequence is exact for any 
valued division
algebra $D$ finite dimensional over its center $F$.
Note that since ${U_D \cap F^*(1+M_D)  = U_F(1+M_D)}$,
there is  a short exact sequence,
\begin{equation}\label{seq}
1 \
 \ \longrightarrow  \ U_D\big /U_F(1+M_D)
 \ \longrightarrow  \ D^*\big/ F^*(1+M_D)
 \ \longrightarrow  \ D^*\big/ F^*U_D
 \ \longrightarrow  \ 1
\end{equation}
Now, the
reduction epimorphism $U_D \to \ol D^*$ has kernel $1+M_D$,
and likewise ${\ol F^* \cong U_F\big/ (1+M_F)}$.
Hence, $U_D\big/ U_F(1+M_D) \cong \ol D^*\big/ \ol F^*$.
Also, the epimorphism $D^* \to \Gamma_D\big/ \Gamma_F$ induced
by the valuation has kernel $F^*U_D$.  By plugging this information into
\eqref{seq} we obtain the short exact sequence.
\begin{equation}\label{localsequence}
 1 \
\longrightarrow  \ {{\overline D }^*}\big/{ {\overline F}^*} \
\longrightarrow  \ {{D^*}} \big/ {{F^*}(1+M_D)} \
\stackrel{v}{\longrightarrow}  \ {\Gamma_D}\big/ {\Gamma_F}
 \ \longrightarrow \  1.
\end{equation}
Thus, $\big|D^*\big/F^*(1+M_D)\big| <\infty$ iff 
$\big| {\overline D }^*\big/ {\overline F}^* \big| <\infty$
and $\big|\Gamma_D/\Gamma_F\big| < \infty$. Note that if 
$\ol D \ne \ol F$, 
then  $\big| {\overline D }^*\big/ {\overline F}^* \big| <\infty$
iff $|F|< \infty$. 

Now, take a field $F$ with a discrete  rank $1$ valuation $v$, i.e.,
$\Gamma_F = \Z$. Let
$L$ be a cyclic Galois field extension of $F$
of degree $n$, and let $\Gal(L/F) = \langle \sigma \rangle$.
Suppose $L$ is unramified over $F$, i.e.,
$v$~has a unique extension from $F$ to $L$ with
$\ol L$ separable of degree $n$ over $\ol F$.
Take any $\pi \in F^*$ with $v(\pi) = 1$, and let $D$ be the cyclic algebra
$D = (L/F, \sigma,\pi)$.  So, $D = \bigoplus\limits _{i = 0}^{n-1}Lx^i$,
where $xcx^{-1} = \sigma(c)$ for all $c\in L$, and $x^n = \pi$.
It is known, see Cor.~2.9 in \cite{jwncp},
and easy to verify, that $v$ extends to a valuation
on $D$ given by $v\big(\sum\limits_{i=0}^{n-1}c_ix^i\big) =
\min\limits_{0\le i \le n} \big(v(c_i) + i/n\big)$.  Hence, $D$ is a
division ring, with $\ol D = \ol L$ and $\Gamma_D = \frac 1n \Z$.
Note that $v(x) = 1/n$, so that the image of $v(x)$
generates the cyclic group $\Gamma_D\big/\Gamma_F
\cong \Z/n\Z$.  But also, $x^n = \pi\in F^*$, so the image
$\widetilde x = xF^*(1+M_D)$ of $x$ in $D^*\big/F^*(1+M_D)$
has order dividing $n$.  Therefore, there is a well-defined
homomorphism $\Gamma_D\big/\Gamma_F \to D^*\big/F^*(1+M_D)$
sending $1/n + \Gamma_F$ to $\widetilde x$; this is a splitting map
for the short exact sequence \eqref{localsequence}. Hence, the middle
group in \eqref{localsequence} is a semidirect product,
\begin{equation}\label{semidirect}
D^*\big/F^*(1+M_D) \ \cong \ \ol L^*\big/ \ol F^*  \ltimes  \Z/n\Z\,,
\end{equation}
where the conjugation action of the distinguished generator of
$\Z/n\Z$ on $\ol L^*\big/\ol F^*$ in the semidirect product is
induced by the automorphism of $\ol L$ induced by $\sigma$
on $L$.

To be more specific, let $q= \ell^m$ for any prime $\ell$ and any positive
integer $m$, and let $F$ be the unramified extension of degree $m$
of the $\ell$-adic field $\Q_\ell$.  With respect to the (complete, discrete
rank $1$) valuation $v$ on $F$ extending the $\ell$-adic valuation on
$\Q_\ell$, we have $\ol F \cong \F_q$, the finite field with $q$
elements.  Let $L$ be the unramified extension of $F$ of degree
$n$.  Then, with respect to the unique extension of $v$ to
$L$, we have $\ol  L \cong \F_{q^n}$, and $L$ is cyclic Galois
over~$F$ as the valuation is Henselian and $\ol L$ is cyclic Galois
over $\ol F$.  Let $\sigma$ be the Frobenius automorphism of $L$,
which is the generator of $\Gal (L/F)$ which induces the
$q$-th power map on $\ol L$.  Since $\ol L^*$ is a cyclic group,
the isomorphism of \eqref{semidirect} becomes
\begin{equation}\label{localcase}
D^*\big/F^*(1+M_D) \ \cong \ \big[\Z\big/\big((q^n-1)/(q-1)\big)\Z\big]
\,\ltimes  \Z/n\Z\,,
\end{equation}
where the distinguished generator of $\Z/n\Z$ acts on
$\Z\big/\big((q^n-1)/(q-1)\big)\Z$ by
multiplication by $q$.
%% (which is the additive
%% version of the $q$-th power map).
If we specialize to $n = 2$, then $D$ is the unique quaternion
division algebra over $F$, and
multiplication by $q$  on  $\Z/(q+1)\Z$
coincides with the inverse map, so the right group in \eqref{localcase}
is the dihedral group $\mathcal D_{q+1}$. 
\end{example}

\begin{remark}
Let $D$ be a {\it strongly tame} valued division algebra over a 
Henselian field $F$, i.e., 
%% $Z(\overline D)$ is separable over $\overline F$  and
${\mychar (\overline F) \nmid \deg(D)}$.
%% does not divide the degree of $D$.
Then,
$1+M_D=(1+M_F)[D^*,1+M_D]$ (see the proof of Th.~3.1 in
\cite{sk1}). It follows that $F^*(1+M_D)=F^*[D^*,1+M_D]$.
Also if $\mychar(\overline F) \not = 2$, then by Th.~21 in \cite{Riehm},
$[D^*,1+M_D]=D''$ where $D''=[D',D']$. Putting these together, if in
the above example $n= 2$ and $q$ is not a $2$-power, then
$$D^*/F^*D''
\cong \mathcal D_{q+1}.$$

\end{remark}

\bigskip

%%%

\section{Maximal subgroups of $D^*$---reduction to the quaternion
case}\label{reducton}

Let $F$ be a field. For any $m \in  \mathbb N$, let $\mu_m(F)$
denote the group of all $m$-th roots of unity in $F$. Also $\mu_m
\subseteq F$ means that $F$ contains a primitive $m$-th root of
unity i.e., $\mu_m(F)$ has order $m$.

For a prime number $p$, ${}_p\text{Br}(F)$  denotes the $p$-torsion
subgroup of the Brauer group $\Br(F)$, and $\Br(F)(p)$
denotes the $p$-primary component of $\Br(F)$.

Throughout this section, $D$ is a non-commutative
division algebra finite dimensional over its center $F$.
%% of degree $n$.
Recall from Prop.~\ref{nonormal} that if $D$ has no (normal)
maximal subgroup (of prime finite index) then $\CK(M_k(D))$ is trivial
for every $k \in \Bbb N$. The goal of this section is to prove the
following theorem:

\begin{theorem}\label{mainprop}
Let $D$ be a division algebra  of degree $n$, with center $F$,
such that the group $\CK(M_k(D))$ is trivial for every positive
integer $k$. Then,
\begin{enumerate}
\item[(i)] if $n$ is odd, then $\mychar(F) > 0$ and $\mychar(F)
\nmid n$ and for each prime number $q$, $F^*={F^*}^q;$

\item[(ii)] if $n$ is even, then $n=2m$ with $m$ odd,
$\mychar(F)=0$, $F$ is euclidean, $F^*={F^*}^q$ for each odd
prime, and $\Br(F)(2)={}_2\text{\rm{Br}}(F)=\{F,\oqt\};$ 

\item[(iii)] in either case, for each odd prime $p$ dividing $n$,
$\mu_p \nsubseteq F$, and ${}_p\text{\rm{Br}}(F)$ contains $($and is generated
by$)$ noncyclic algebras of degree $p$, and $\DEG{F(\mu_p)}F\ge 4$.
\end{enumerate}
\end{theorem}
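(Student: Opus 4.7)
My plan is first to translate the hypothesis using Proposition~\ref{nonormal}: $\CK(M_k(D)) = 1$ for all $k$ is equivalent to $D^*$ having no normal maximal subgroup, hence to the non-existence of any surjection $D^* \twoheadrightarrow \Z/p\Z$ for any prime $p$.  Every such surjection factors through $D^*/D'$, and the $k=1$ case gives $D^*=F^*D'$, whence $D^*/D' \isom F^*/(F^*\cap D')$; since $\Nrd|_{F^*}$ is the $n$-th power map, $F^*\cap D' \sub \mu_n(F)$.  Letting $k$ vary in the hypothesis gives $F^{*n}=\Nrd(D^*)=F^{*kn}$ for every $k\ge 1$, so $F^{*n}$ is divisible.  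From this I would immediately deduce $F^* = F^{*q}$ for every prime $q$ coprime to $n$, and, by chasing divisibility of $F^*/(F^*\cap D')$ through the finite group $\mu_n(F)$ (whose order is prime to $q$ when $\mu_q\nsubseteq F$), also $F^* = F^{*q}$ for every prime $q$ with $\mu_q \nsubseteq F$.

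Second, I would pass to the primary components via the coprime tensor decomposition $D \isom D_p \tensor[F] D_{p'}$, a division algebra with $\deg(D_p) = p^{v_p(n)}$ and $\deg(D_{p'}) = n/p^{v_p(n)}$.  Using the formula $\Nrd_D(x) = \Nrd_{D_p}(x)^{\deg(D_{p'})}$ for $x\in D_p$, together with $D_{p'}^*\sub \Cent_{D^*}(D_p^*)$, I would establish a lifting principle: a normal maximal subgroup of $D_p^*$ of prime index $q$ coprime to $\deg(D_{p'})$ extends to one of $D^*$.  Applying Corollary~2 of \cite{kesh} to $D_p$ then yields, for each odd prime $p\mid n$, that all Kesh hypotheses must fail --- forcing $\mychar(F) \ne 0$, $\mychar(F) \ne p$, and $\mu_p \nsubseteq F$ (hence $F^* = F^{*p}$ by the first paragraph).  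For $p=2$: since $\mu_2 \sub F$ whenever $\mychar(F) \ne 2$, Kesh applies to $D_2$ whenever $\deg(D_2) \ge 4$, so $\deg(D_2) \in \{1,2\}$.  When $\deg(D_2) = 2$, Proposition~\ref{euclidean} applied to $D_2$ forces $F$ euclidean, $D_2 \isom \oqt$, and $\mychar(F) = 0$; combined with Merkurjev's theorem (${}_2\Br(F)$ is generated by quaternion algebras, and over euclidean $F$ the only quaternion division algebra is $\oqt$) and the divisibility of $F^{*2}$ (from $F^{*n} = F^{*2m} = F^{*2}$, since $F^* = F^{*m}$ for $m$ odd), one gets $\Br(F)(2) = {}_2\Br(F) = \{F, \oqt\}$.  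Assembling these pieces yields (i) and (ii).

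For (iii), once $\mu_p \nsubseteq F$ is established, $\DEG{F(\mu_p)}{F}$ divides $p-1$.  To exclude values $2$ and $3$ and to establish noncyclic generation of ${}_p\Br(F)$, I would pass to the scalar extension $F(\mu_p)$ (where $\mu_p$ is visible, so Merkurjev--Suslin identifies ${}_p\Br(F(\mu_p))$ with the subgroup generated by degree-$p$ symbol algebras) and use corestriction back to $F$, which is surjective on $p$-torsion since $\DEG{F(\mu_p)}{F}$ is prime to $p$.  A nontrivial cyclic class in ${}_p\Br(F)$, or a small degree extension $F(\mu_p)/F$, would furnish a degree-$p$ division algebra over $F$ possessing a cyclic maximal subfield after scalar extension; a Kesh-style symbol construction would then produce a normal maximal subgroup of $D_p^*$ of index $p$, which by the lifting principle above transfers to $D^*$ and contradicts the hypothesis.

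The main obstacle I anticipate is making the lifting principle fully rigorous --- in particular, verifying that a character $F^*\to\Z/p\Z$ arising from a normal maximal subgroup of $D_p^*$ annihilates the full group $F^*\cap D'$ and not merely $F^*\cap D_p'$.  This hinges on a careful comparison of the $p$-primary torsion of $\mu_n(F)$ with $\mu_{\deg(D_p)}(F)$, and on the behavior of the reduced Whitehead groups $\SK(D_p)$ and $\SK(D)$ under the coprime tensor decomposition.
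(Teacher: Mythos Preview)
Your opening paragraph recovers essentially what the paper proves as Lemma~\ref{dec}: from $\CK(M_k(D))=1$ for all $k$ one gets $F^{*n}=F^{*nk}$ for all $k$, hence (via the elementary Lemma~\ref{lem2}) $F^{*p^r}=F^{*p^{r+1}}$ for each prime power $p^r\Vert n$ and $F^*=F^{*q}$ for every prime $q\nmid n$.  That part is fine.

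The divergence, and the gap, is your second paragraph.  You pass to the $p$-primary factor $D_p$, invoke \cite{kesh} there, and transport the conclusion back to $D$ via your ``lifting principle.''  You rightly flag this lifting as the main obstacle, and it is a genuine one: the inclusion $D_p^*\hookrightarrow D^*$ induces a map $D_p^*/D_p'\to D^*/D'$ whose image need not be all of $D^*/D'$, and there is no evident reason a character $D_p^*\to\Z/p\Z$ killing $F^*D_p'$ should extend to $D^*$.  Your appeal to Proposition~\ref{euclidean} for $D_2$ has the same defect: that proposition needs $\CK(M_2(D_2))=1$, which is again a transfer from $D$ to a primary factor that you have not justified.

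The paper sidesteps this completely by never using $D_p$ beyond the single consequence ${}_p\Br(F)\ne 0$.  Its Proposition~\ref{propbrauer} shows that the purely field-theoretic hypotheses $F^{*p^r}=F^{*p^{r+1}}$ and ${}_p\Br(F)\ne 0$ already force everything.  For odd $p$: if $\mychar(F)=p$ then $F$ is perfect and Albert's theorem kills $\Br(F)(p)$; if $\mu_p\subseteq F$ then by Lemma~\ref{pr=pr1} $F^*/F^{*p}$ is cyclic of order at most $p$, and Merkurjev--Suslin makes ${}_p\Br(F)$ generated by the single symbol $(\omega,\omega)_p\cong(\omega,-1)_p$, which is split.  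Thus $\mu_p\nsubseteq F$, $\mychar(F)\ne p$, and $F^*=F^{*p}$; then every cyclic algebra of degree $p$ is split (its parameter is a $p$-th power, hence a norm), so Merkurjev's theorem that ${}_p\Br(F)$ is generated by degree-$p$ algebras forces noncyclic generators, and $[F(\mu_p):F]\ge 4$ is a direct citation of Merkurjev's cyclicity criterion.  For $p=2$ the same style of argument with Merkurjev's theorem on quaternion generation yields the euclidean conclusion and $\Br(F)(2)={}_2\Br(F)$ directly.  The remaining assertion $\mychar(F)>0$ in the odd case is Lemma~\ref{zerochar}: in characteristic~$0$, $q$-divisibility of $F^*$ for all primes $q$ forces $\mu_p\subseteq F$ by an inductive Kummer argument, contradicting what was just shown.

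In short, the fix is not to prove your lifting principle but to discard it: once your first paragraph delivers $F^{*p^r}=F^{*p^{r+1}}$, argue directly with $F$ and $\Br(F)$ as above, and the primary decomposition of $D$ becomes unnecessary.
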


The proof will be given below, after some preliminary steps.

\begin{lemma}\label{dec}
Let $D$ be a division algebra with center $F$, where $D$ has degree
$n=p_1^{r_1} \ldots p_l^{r_l}$ with the $p_i$ distinct primes. If
$\CK(M_k(D))$ is trivial for every positive integer $k$ then
${{F^*}^{p_i^{r_i}}={F^*}^{p_i^{r_i+1}}}$, $1\leq i \leq l$ and
$F^*={F^*}^q$ for every prime $q$ other than the $p_i$.
\end{lemma}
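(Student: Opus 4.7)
My plan is to feed the hypothesis through the reduced norm. By \eqref{CKformula}, the assumption $\CK(M_t(D))=1$ reads $D^* = F^{*t}D'$ for every $t\geq 1$. Applying $\Nrd$, which is the $n$-th power map on $F^*$ and kills commutators, gives $\Nrd(D^*)=F^{*nt}$ for every $t\geq 1$. Since the left side is independent of $t$, we obtain $F^{*n} = F^{*nt}$ for every $t$, i.e., the subgroup $F^{*n}$ of $F^*$ is divisible.

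Both assertions of the lemma then follow from divisibility of $F^{*n}$ by a Bezout argument inside the abelian group $F^*$. For a prime $q\nmid n$, choose $a,b\in \Z$ with $an+bq=1$ and split $x\in F^*$ as $x = (x^a)^n\cdot(x^b)^q$; the first factor lies in the divisible group $F^{*n}$ and is therefore itself a $q$-th power, giving $x\in F^{*q}$. For one of the distinguished primes $p_i$, set $m_i = n/p_i^{r_i}$, so that $\gcd(m_i, p_i^{r_i+1}) = 1$, and apply Bezout to that pair. Raising a generic $y\in F^*$ to the $p_i^{r_i}$ power and collecting exponents expresses $y^{p_i^{r_i}}$ as a product of an element of $F^{*n}$ (hence a $p_i^{r_i+1}$-th power by divisibility) and a manifest $p_i^{r_i+1}$-th power, so $F^{*p_i^{r_i}} \subseteq F^{*p_i^{r_i+1}}$; the reverse inclusion is obvious.

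I do not expect a real obstacle. All of the content is concentrated in the first step, where the reduced norm converts the abstract vanishing of $\CK(M_t(D))$ for all $t$ into the concrete statement that $F^{*n}$ is divisible. After that, each divisibility claim reduces to a short abelian-group exercise; one only has to arrange the Bezout decomposition so that the nontrivial factor lands inside the known divisible subgroup $F^{*n}$.
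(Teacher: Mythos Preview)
Your proof is correct and follows essentially the same route as the paper: both apply $\Nrd$ to $D^* = F^{*k}D'$ to obtain $F^{*n} = F^{*nk}$ for all $k$, then extract the stated divisibility conditions by an elementary abelian-group argument. The only difference is cosmetic: the paper isolates the abelian-group step as a separate Lemma~\ref{lem2} proved via the direct-sum decomposition $A/stA \cong sA/stA \oplus tA/stA$ for $\gcd(s,t)=1$, whereas you observe directly that $F^{*n}$ is divisible and use B\'ezout identities, which is perhaps slightly more streamlined.
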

\begin{proof}
Since $\CK(M_k(D))\cong D^*/{F^*}^kD'$, if $\CK(M_k(D))$ is
trivial then, $D^*={F^*}^kD'$ for any $k \in \Bbb N$. Applying
$\Nrd$ to this equation, we get:
$$
{F^*}^n \ \subseteq \ \Nrd(D^*)\ =\ \Nrd({F^*}^kD')\ =\ {F^*}^{nk}
\ \subseteq\  {F^*}^n.
$$
Thus for every $k \in \mathbb N$,
\begin{equation}\label{tri}
{F^*}^n\ =\ {F^*}^{nk}.
\end{equation}
The Lemma then follows from (\ref{tri}) and Lemma \ref{lem2}
below.
\end{proof}

\begin{lemma} \label{lem2} Let $A$ be an abelian group, written additively. Let
$n=p_1^{r_1} \ldots p_l^{r_l}$ with the $p_i$ distinct primes and
$r_i \geq 1$. The following are equivalent:
\begin{enumerate}
\item[(i)] $nA=nkA$ for every $k \in \mathbb N$.

\item[(ii)] $p_i^{r_i}A=p_i^{r_i+1}A$ for each $i$, and $A=qA$ for
every prime $q$ different from the $p_i$.
\end{enumerate}
\end{lemma}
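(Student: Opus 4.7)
The plan is to treat the two directions separately, with (ii)$\Rightarrow$(i) a routine multiplicative calculation and (i)$\Rightarrow$(ii) the substantive direction. For (ii)$\Rightarrow$(i), I would first iterate (ii.a) to obtain $p_i^{r_i}A = p_i^{r_i+s}A$ for every $s \geq 0$ and each $i$, and combine with (ii.b) to obtain $A = mA$ for every $m$ coprime to $n$. Factoring an arbitrary $k$ as $k = m\prod_i p_i^{s_i}$ with $\gcd(m,n)=1$, I can then rewrite $nkA = \big(\prod_i p_i^{r_i+s_i}\big)\,mA = \big(\prod_i p_i^{r_i+s_i}\big)A$ and collapse each factor $p_i^{r_i+s_i}A$ down to $p_i^{r_i}A$ one prime at a time, yielding $nkA = nA$.

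For (i)$\Rightarrow$(ii), my first move is to repackage the hypothesis: since $nA = nkA$ holds for every $k \in \mathbb{N}$, we have $m(nA) = nA$ for every $m \in \mathbb{N}$, so $nA$ is a \emph{divisible} subgroup of $A$. Next, for a prime $q \notin \{p_1,\ldots,p_l\}$, I would apply (i) at $k=q$. For each $a\in A$ this produces $a'\in A$ with $n(a-qa')=0$, so $a \in qA + A[n]$, where $A[n]$ denotes the $n$-torsion subgroup. Since $\gcd(q,n)=1$, multiplication by $q$ is a bijection on $A[n]$, forcing $A[n] = qA[n] \subseteq qA$ and hence $A = qA$.

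For each prime $p_i$ dividing $n$, my plan is to use a B\'ezout trick to transfer divisibility from $nA$ up to $p_i^{r_i}A$. Setting $c = n/p_i^{r_i}$ so that $\gcd(c,p_i)=1$, I pick integers $s,t$ with $sc + tp_i = 1$. For any $a \in A$, this gives
\[
p_i^{r_i}a \ = \ (sc+tp_i)\,p_i^{r_i}a \ = \ s\cdot(na) + t\cdot p_i^{r_i+1}a.
\]
Since $na \in nA$ and $nA$ is divisible (in particular $p_i^{r_i+1}$-divisible), the first summand lies in $p_i^{r_i+1}A$, and the second does trivially. This yields $p_i^{r_i}A \subseteq p_i^{r_i+1}A$, with the reverse inclusion automatic.

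The main obstacle I anticipate is precisely this last step: the hypothesis directly controls only $nA$, whereas (ii.a) is a statement about the larger subgroup $p_i^{r_i}A$. The B\'ezout identity is the mechanism that ``factors out'' the cofactor $c$ and converts divisibility of $nA$ into the required equality $p_i^{r_i}A = p_i^{r_i+1}A$; without separating $p_i$ from the remaining prime powers in $n$, there seems to be no direct route from the hypothesis to the individual primary conclusions.
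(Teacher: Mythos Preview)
Your proof is correct, but the route differs from the paper's. For (i)$\Rightarrow$(ii) the paper works entirely with quotient groups, using the coprime splitting $A/stA = (sA/stA)\oplus(tA/stA)$ for $\gcd(s,t)=1$: taking $(s,t)=(n,q)$ and using $nA=nqA$ collapses one summand and gives $A=qA$; taking $(s,t)=(p_i^{r_i+1},u)$ with $n=p_i^{r_i}u$ and multiplying the decomposition by $p_i^{r_i}$ collapses the other summand via $nA=np_iA$ and yields $p_i^{r_i}A=p_i^{r_i+1}A$. Your argument instead extracts the single structural fact that $nA$ is divisible, handles the coprime primes via the torsion subgroup $A[n]$, and then uses a B\'ezout identity $sc+tp_i=1$ (with $c=n/p_i^{r_i}$) to write $p_i^{r_i}a=s(na)+tp_i^{r_i+1}a$ and invoke divisibility of $nA$ to push the first term into $p_i^{r_i+1}A$. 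The paper's approach is more uniform---one decomposition tool handles both cases---while yours isolates a clean conceptual statement (``$nA$ is divisible'') that makes the $p_i$ step a one-line B\'ezout computation; both are short and elementary.
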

\begin{proof}
(ii) $\Rightarrow$ (i) is clear. (It suffices to check (i) for $k$
a prime number.)

(i) $\Rightarrow$ (ii). Note that for any $s,t \in \mathbb N$ with
$\gcd(s,t)=1$, we have
\begin{equation} \label{abelianst}
\textstyle {A}\big/{stA} \ = \ \big({sA}\big/{stA}\big) \
\bigoplus \ \big({tA}\big/{st A}\big)\,.
\end{equation} For, as $\text{gcd}(s,t)=1$, $A=sA+tA$, and
$\big(sA/stA\big) \cap \big(tA/st A\big) =(0)$, since $sA/stA$ is $t$-torsion and
$tA/stA$ is $s$-torsion. Now, take any prime $q$ different from
the $p_i$. Since $\text{gcd}(q,n)=1$, (\ref{abelianst}) and (i)
yield
$$
\textstyle{A}\big/{nqA} \ = \ \big( {nA}\big/{nqA}\big)\
\bigoplus \ \big({qA}\big/{nqA}\big)\ =\
 (0) \
\bigoplus\ \big({qA}\big/{nqA}\big)\,.
$$
So, $A=qA$. Now, for $1 \leq i \leq
l$, write $n=p_i^{r_i}u$, so $np_i=p_i^{r_i+1}u$ with
$\text{gcd}(p_i^{r_1+1},u)=1$. Then (\ref{abelianst}) shows
$$
\textstyle{A}\big/{np_iA} \ =\ \big({p_i^{r_i+1}A}\big/{np_iA}\big)\
\bigoplus\ \big({uA}\big/{np_iA}\big)\, .
$$
Multiplying this by $p_i^{r_i}$:
\begin{align*}
\textstyle
{p_i^{r_i}A}\big/{np_iA}\ &=\
\big(\big[{p_i^{r_i}(p_i^{r_i+1}A)+np_iA}\big]\big/{np_iA}\big) \
\textstyle\bigoplus\
\big({nA}\big/{np_iA}\big) \ \\
&\subseteq\  \big({p_i^{r_i+1}A}\big/{np_iA}\big)\ \textstyle\bigoplus \ (0)
\ \subseteq\  {p_i^{r_i}A}\big/{np_iA}.
\end{align*}
So,
$p_i^{r_i}A=p_i^{r_i+1}A$.
\end{proof}

\begin{lemma}\label{pr=pr1} If $p$ is a prime number and $r \in \mathbb N$, then
${F^*}^{p^r}={F^*}^{p^{r+1}}$ if and only if
$F^*=\mu_{p^r}(F){F^*}^p$. 
\end{lemma}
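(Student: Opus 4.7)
The plan is to prove both directions by direct manipulation of $p$-th powers, using only the observation that $x \in \mu_{p^r}(F)$ is equivalent to $x^{p^r} = 1$.

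For the forward direction, I would assume ${F^*}^{p^r}={F^*}^{p^{r+1}}$ and pick an arbitrary $a \in F^*$. Then $a^{p^r} \in {F^*}^{p^r} = {F^*}^{p^{r+1}}$, so there exists $b \in F^*$ with $a^{p^r} = b^{p^{r+1}} = (b^p)^{p^r}$. Setting $\zeta = a/b^p$, one has $\zeta^{p^r} = 1$, so $\zeta \in \mu_{p^r}(F)$ and $a = \zeta b^p \in \mu_{p^r}(F){F^*}^p$.

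For the reverse direction, I would assume $F^*=\mu_{p^r}(F){F^*}^p$. The containment ${F^*}^{p^{r+1}} \subseteq {F^*}^{p^r}$ is trivial. For the reverse, take any $a^{p^r} \in {F^*}^{p^r}$ and write $a = \zeta b^p$ with $\zeta \in \mu_{p^r}(F)$ and $b \in F^*$. Then
\begin{equation*}
a^{p^r} \ = \ \zeta^{p^r} b^{p^{r+1}} \ = \ b^{p^{r+1}} \ \in \ {F^*}^{p^{r+1}},
\end{equation*}
as $\zeta^{p^r} = 1$ by definition of $\mu_{p^r}(F)$.

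There is no real obstacle here; the lemma is essentially a direct translation, and both implications are one-line computations once one recognizes that the role of $\mu_{p^r}(F)$ is precisely to measure the failure of the $p^r$-th power map to be injective, while the $p$-th power condition controls divisibility at the next level.
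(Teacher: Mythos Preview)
Your proof is correct and follows essentially the same argument as the paper's: for the forward direction you both write $a^{p^r}=b^{p^{r+1}}$ and observe that $ab^{-p}$ is a $p^r$-th root of unity, and the paper simply declares the converse ``clear'' where you spell it out.
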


\begin{proof}
Suppose ${F^*}^{p^r}={F^*}^{p^{r+1}}$. Take any $a \in F^*$. There
is a $b \in F^*$ with $a^{p^r}=b^{p^{r+1}}$. Let $\omega=ab^{-p}$. Then
$\omega \in \mu_{p^r}(F)$, and $a=\omega b^p \in \mu_{p^r}(F){F^*}^p$.
So, $F^*=\mu_{p^r}(F){F^*}^p$. The converse is clear.
\end{proof}

\begin{proposition}\label{propbrauer} 
Let $F$ be a field with ${F^*}^{p^r}={F^*}^{p^{r+1}}$ for some
prime $p$ and some $r \in \mathbb N$, and suppose ${}_p{\Br(F)}  
\neq (0)$. Then,
\begin{enumerate}
\item[(i)] if $p$ is odd, then $\mychar(F) \neq p$, $F^*={F^*}^p$,
$\mu_p \nsubseteq F$, and ${}_p{\Br(F)}$ is generated by noncyclic
algebras of degree $p;$

\item[(ii)] if $p=2$, then $\mychar(F)=0$, $F$ is euclidean and
$\Br(F)(2)={}_2{\Br(F)}=\{F,\oqt\}$.

\end{enumerate}
\end{proposition}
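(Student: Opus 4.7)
The plan is to leverage Lemma~\ref{pr=pr1}, which recasts the hypothesis ${F^*}^{p^r}={F^*}^{p^{r+1}}$ as $F^* = \mu_{p^r}(F){F^*}^p$, together with the generation of ${}_p\Br(F)$ by degree-$p$ algebras (Merkurjev), and by symbol algebras when $\mu_p\subseteq F$ (Merkurjev--Suslin). In each potential bad case, I will drive the presentation of ${}_p\Br(F)$ to zero and contradict the hypothesis. I first dispose of $\mychar(F)=p$: then $x^{p^r}-1=(x-1)^{p^r}$ forces $\mu_{p^r}(F)=\{1\}$, so Lemma~\ref{pr=pr1} yields $F^*={F^*}^p$, and every characteristic-$p$ symbol of the form $[c,b^p)_p$ (or char-$2$ quaternion $[c,b^2)_2$) is split, giving ${}_p\Br(F)=0$, a contradiction. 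Hence $\mychar(F)\ne p$.

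For part~(i), it remains to show $\mu_p\not\subseteq F$; Lemma~\ref{pr=pr1} then upgrades $F^*=\mu_{p^r}(F){F^*}^p$ to $F^*={F^*}^p$. Assume for contradiction $\mu_p\subseteq F$. A routine cyclic-group count inside $F^*$ yields $|F^*/{F^*}^p|=p$, generated by some $c\in F^*$. Then Merkurjev--Suslin represents ${}_p\Br(F)$ by symbols $(c^i,c^j)_p$, each a power of $(c,c)_p$. The Steinberg identity gives $(c,-c)_p=0$, and $-1=(-1)^p\in{F^*}^p$ (since $p$ is odd) gives $(c,-1)_p=0$; hence by bilinearity $(c,c)_p=(c,-1)_p+(c,-c)_p=0$, so ${}_p\Br(F)=0$, contradiction. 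For the noncyclic-generator clause, take any cyclic $F$-algebra $A=(L/F,\sigma,a)$ of degree~$p$ and let $K=F(\mu_p)$, so $[K:F]$ divides $p-1$ and is coprime to $p$. Kummer theory presents $A\otimes_F K$ as a symbol algebra $(a,b)_{p,K}$ which is split (since $a\in{F^*}^p\subseteq{K^*}^p$); the standard corestriction-restriction argument shows $\operatorname{res}_{K/F}$ is injective on $p$-torsion, so $A$ itself is split. Merkurjev's degree-$p$ generation of ${}_p\Br(F)$ then forces the generators to be noncyclic.

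For part~(ii), $\mychar(F)\ne 2$ gives $\mu_2\subseteq F$ and the same count yields $|F^*/{F^*}^2|\in\{1,2\}$; the case of size $1$ kills all quaternion algebras, so is excluded. I rule out $\mu_4\subseteq F$ similarly: if $-1=j^2$, the unique candidate $(c,c)_2=(c,-1)_2+(c,-c)_2=0$ by Steinberg and $-1\in{F^*}^2$, killing ${}_2\Br(F)$. Hence $-1\notin{F^*}^2$ and $F^*={F^*}^2\sqcup(-{F^*}^2)$. The crucial step is closure of ${F^*}^2$ under addition: for squares $a,b$ we have $a+b\ne 0$ (else $-1\in{F^*}^2$), and $a+b\in-{F^*}^2$ would express $-1$ as a sum of two squares, splitting $\oqt$; but the quaternion analysis forces ${}_2\Br(F)=\{F,\oqt\}$ to be nontrivial, so $\oqt$ is nonsplit, contradiction. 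Hence $a+b\in{F^*}^2$, making $F$ euclidean (and $\mychar(F)=0$). To obtain $\Br(F)(2)={}_2\Br(F)$, observe that $F(\sqrt{-1})$ is quadratically closed, so ${}_2\Br(F(\sqrt{-1}))=0$ and inductively $\Br(F(\sqrt{-1}))(2)=0$; the kernel of restriction on $\Br(F)(2)$ consists of classes split by the quadratic extension $F(\sqrt{-1})$, which are quaternion classes, hence lie in $\{F,\oqt\}$.

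The main obstacle is the noncyclic-generator clause of~(i), which requires Merkurjev's degree-$p$ generation of ${}_p\Br(F)$, the injectivity of $p$-torsion restriction under coprime-degree extensions, and the Kummer descent to $F(\mu_p)$ to work in concert; making the symbol identities fully rigorous without further hypotheses on $F$ is the principal technical difficulty.
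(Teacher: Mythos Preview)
Your proof is correct and follows essentially the same architecture as the paper: both use Lemma~\ref{pr=pr1} to rewrite the hypothesis, dispose of $\mychar(F)=p$ via perfectness, rule out $\mu_p\subseteq F$ by showing the single symbol generator $(c,c)_p$ is split, and invoke Merkurjev's degree-$p$ generation for the noncyclic clause. The only differences are in execution: the paper dispatches cyclic degree-$p$ algebras in one line (since $a\in F^{*p}$ means $a=b^p=N_{L/F}(b)$ is a norm, so $(L/F,\sigma,a)$ splits) rather than passing to $F(\mu_p)$ and using injectivity of restriction; and it proves $F(\sqrt{-1})^*=F(\sqrt{-1})^{*2}$ via the Hilbert~90 sequence~\eqref{hilbert} rather than asserting it as a known property of euclidean fields.
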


\begin{proof}
Let $\omega$ be a generator of the cyclic group $\mu_{p^r}(F)$. By
Lemma \ref{pr=pr1}, $F^*=\langle \omega \rangle{F^*}^p$.

(i) Assume $p$ is odd. If $\mychar(F)=p$, then by Albert's theorem
(see \cite{albert}, p.~109, Th.~30 or \cite{jacobson}, p.~173, Th.~4.5.7),
$\Br(F)(p)$ is generated by cyclic algebras of degree a power of~
$p$. But when $\mychar(F)=p$ we have $\omega=1$, so
$F^*={F^*}^p$, i.e., $F$ is perfect, so 
%% $F$ has no cyclic algebra.
every generator of $\Br(F)(p)$ is split.
This contradicts the assumption that ${}_p\text{Br}(F)  \neq (0)$. Hence,
$\mychar(F) \neq p$. If $\mu_p \subseteq F$, then the
Merkurjev-Suslin theorem  (see \cite{suslin} or
\cite{gille}, Ch.~8) says that ${}_p \text{Br}(F)$ is
generated by $p$-symbol algebras. Since $F^*/{F^*}^p=\langle
\omega{F^*}^p\rangle$, we would then have ${}_p \text{Br}(F)$ is a
cyclic group generated by the $p$-symbol algebra
$(\omega,\omega;F)_p$. But $(\omega,\omega;F)_p\cong
(\omega,-1;F)_p$, so that $(\omega,\omega;F)_p$ is both
$p$-torsion and $2$-torsion in ${}_p \text{Br}(F)$, so it must be split.
This cannot occur since ${}_p \text{Br}(F)\neq 0$. Hence $\mu_p
\nsubseteq F$. Therefore $\omega=1$ and $F^*={F^*}^p$. By a
theorem of Merkurjev (see \cite{merk}, Th.~2), since
$\mychar(F)\neq p$, ${}_p\text{Br}(F)$ is generated by algebras of
degree~$p$. Since $F^*={F^*}^p$, these generators cannot be cyclic
algebras. (Of course, the existence of noncyclic division algebras
of prime degree is a major open question).

(ii) Assume now that $p=2$. As in case (i), if $\mychar(F)=2$,
then $F$ is perfect, so that ${}_2\text{Br}(F)=(0)$ by Albert's theorem,
contrary to hypothesis. So, $\mychar(F)\neq 2$. By Merkurjev's
Theorem (see, e.g., \cite{kersten}, Kap.~V for a proof), ${}_2\text{Br}(F)$ 
is generated by quaternion algebras. Since
$F^*=\langle \omega \rangle {F^*}^2$, ${}_2\text{Br}(F)$ must be a cyclic
group generated by the quaternion algebra $(\frac{\omega,\omega}{F})$. But
$(\frac{\omega,\omega}{F})=(\frac{\omega,-1}{F})$. If $\mu_4 \subseteq F$,
then
$-1 \in {F^*}^2$, so that $(\frac{\omega,-1}{F})$ is split; then
${}_2\text{Br}(F)=0$, a contradiction. Hence $\mu_4 \nsubseteq F$,
forcing $\omega=-1$, and $-1 \not \in {F^*}^2$. Since $F^*=\langle \omega
\rangle {F^*}^2$, we have $F^*={F^*}^2\cup -{F^*}^2$ (a disjoint
union). Also, ${}_2\text{Br}(F)=\{[F],[H]\}$ where $H=\oqt$ which is
nonsplit. It follows that $\mychar(F)=0$. For, if
$\mychar(F)=q\not = 0$, then $H$ is split, since already over the
prime field $\mathbb F_q$, $(\frac{-1,-1}{\mathbb F_q})$ is split.

Let $\qi$ and $\qj$ be the standard generators of $H$. Take any 
$a,b \in F^*$. Then $a^2+b^2=\Nrd(a+b\qi) \not = 0$ as $H$ is a
division ring. Hence, there is $c \in F^*$ with $a^2+b^2=\pm c^2$.
If $a^2+b^2=-c^2$, then $0=a^2+b^2+c^2=\Nrd(a+b\qi+c\qj)$, which
cannot occur, as $H$ is a division ring. Therefore, $a^2+b^2=c^2$.
Hence, $F$ is pythagorean. Since $-1 \not \in {F^*}^2$, $-1$ is
therefore not a sum of squares. Therefore, $F$ is formally real.
Since $F^*={F^*}^2\cup -{F^*}^2$, $F$ is in fact euclidean. Now,
let $L=F(\sqrt{-1})$. By Hilbert's Th.~90, we have the exact
sequence
\begin{equation}\label{hilbert}
{F^*}\big/{{F^*}^2} \ \longrightarrow \ {L^*}\big/{{L^*}^2}
\ \longrightarrow  \ {F^*}\big/{{F^*}^2},
\end{equation}
where the left map is induced by the inclusion $F^*\hookrightarrow
L^*$, and the right map is induced by the norm $N_{L/F}$. For $a,b
\in F^*$, we have $N_{L/F}(a+b\sqrt{-1})=a^2+b^2 \in {F^*}^2$.
Thus, the right map in (\ref{hilbert}) is the $0$-map. The left
map in (\ref{hilbert}) is also $0$-map, since $-1\in {L^*}^2$.
Hence, $L^*={L^*}^2$. Therefore, Merkurjev's Theorem shows that
${}_2\text{Br}(L)=0$, so $\Br(L)(2)=0$. Hence, $\Br(F)(2) \subseteq
\Br(L/F)\ (\,=\ker(\Br(F)\rightarrow \Br(L))\,).$ But, as $\DEG LF=2$,
$\Br(L/F)\subseteq {}_2\text{Br}(F)$. Thus, $\Br(F)(2)={}_2\text{Br}(F)$.
\end{proof} 

\begin{lemma}\label{zerochar}
Suppose $\mychar(F)=0$ and $F^*={F^*}^q$ for each prime $q$. Then
$\mu_q \subseteq F$ for each prime $q$.
\end{lemma}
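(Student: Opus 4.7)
The plan is to prove by strong induction on $n$ that $\mu_n \subseteq F$ for every positive integer $n$; the lemma is the special case where $n = q$ is prime. First observe that the hypothesis gives $F^* = F^{*m}$ for every $m \in \mathbb{N}$, since $q$-divisibility for each prime $q$ iterates to $m$-divisibility for all $m$.

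For the inductive step I would split into composite and prime-power cases. If $n = ab$ with $\gcd(a,b) = 1$ and $1 < a, b < n$, induction yields primitive $a$-th and $b$-th roots of unity in $F$ whose product is a primitive $n$-th root of unity, so $\mu_n \subseteq F$. If $n = p^k$ with $k \geq 2$, induction supplies a primitive $p^{k-1}$-th root of unity $\zeta \in F$; pick $\beta \in F^*$ with $\beta^p = \zeta$, which exists by $p$-divisibility. A short order computation shows $\beta$ has order exactly $p^k$, so $\mu_{p^k} \subseteq F$.

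This reduces matters to $n = p$ prime. For $p = 2$ there is nothing to prove, as $\mychar(F) = 0$ yields $-1 \in F$. For odd $p$, set $E = F(\mu_p)$ and $d = [E:F]$, a divisor of $p - 1$. Suppose for contradiction that $d > 1$. Since $d < p$, the inductive hypothesis gives $\mu_d \subseteq F$. Then $E/F$ is cyclic of degree $d$ with $\mu_d \subseteq F$ and $\mychar(F) \nmid d$, so Kummer theory produces $a \in F^*$ with $E = F(\sqrt[d]{a})$ whose class in $F^*/F^{*d}$ has order exactly $d$. But $F^* = F^{*d}$ by the initial observation, so $F^*/F^{*d}$ is trivial, contradicting $d > 1$. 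Hence $d = 1$ and $\mu_p \subseteq F$.

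The main thing to arrange is that the inductive scaffolding provides both ingredients needed for the Kummer step simultaneously: enough roots of unity ($\mu_d \subseteq F$) to invoke Kummer theory on $E/F$, and enough divisibility ($F^* = F^{*d}$) to collapse the resulting extension. Both follow painlessly from the induction together with the hypothesis, so the real conceptual move is simply deciding to prove the stronger statement about all $\mu_n$ rather than merely the prime $\mu_q$ the lemma asks for.
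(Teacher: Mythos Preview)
Your proof is correct and follows the same idea as the paper's: induct to get enough roots of unity in $F$, then apply Kummer theory to the cyclic extension $F(\mu_p)/F$ and contradict the divisibility of $F^*$. The paper streamlines slightly by inducting only over primes and passing to a prime-degree subextension $K\subseteq F(\mu_q)$, so it needs only $\mu_\ell\subseteq F$ for primes $\ell<q$ and avoids your composite and prime-power scaffolding entirely.
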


\begin{proof} This is Lemma 3 of \cite{may}.  We include the
short proof for the convenience of the reader.
The proof is by induction on $q$. Of course $\mu_2=\{\pm 1\}
\subseteq F$. Now assume $q>2$ and $\mu_\ell\subseteq F$ for all
primes $\ell<q$. We have $F(\mu_q)$ is an abelian Galois extension of
$F$ with $[F(\mu_q)\co F]\,\big|\,(q-1)$. If $F(\mu_q)\not = F$, then there
is a prime $p \divides (q-1)$ and a sub-extension $F \subseteq K
\subseteq F(\mu_q)$ with $[K\co  F]=p$, so $K$ is cyclic Galois over
$F$. Since $\mu_p \subseteq  F$ by induction, $K$ is a $p$-Kummer
extension of $F$. This cannot occur, as $F^*={F^*}^p$. Hence,
$F(\mu_q)=F$, as desired.
\end{proof}

We can now prove Theorem~\ref{mainprop}.

\noindent {\it Proof of Theorem~\ref{mainprop}.} Since
$\CK(M_k(D))$ is trivial for every $k \in \Bbb N$, Lemma \ref{dec}
shows that $F^*={F^*}^q$ for each prime $q$ with $q \nmid n$. Let
$p$ be an odd prime with $p \divides n$. Then ${}_p\text{Br}(F)  \not =
0$ since  it contains some nonsplit tensor power of $D$. So,
Lemma~\ref{dec} and Prop.~\ref{propbrauer}(i) show that
$F^*={F^*}^p$, $\mu_p \nsubseteq F$, and ${}_p\text{Br}(F)$ is generated
by noncyclic algebras of degree $p$. This last condition implies
$[F(\mu_p)\co F]\ge 4$, by the Corollary to Th.~1 in \cite{merk}.
Suppose $n$ is odd. Then $F^*={F^*}^q$ for every prime $q$. Since
$\mu_p \nsubseteq F$ for any prime $p$ with $p \divides n$, Lemma
\ref{zerochar} shows that $\mychar(F)\not=0$. Also Lemma~\ref{dec}
and Prop.~ \ref{propbrauer}(i) show that $\mychar(F) \nmid n$.
This completes the proof of (i) and (iii)
of Th.~\ref{mainprop}. For (ii) assume now
that $n$ is even. Lemma~\ref{dec} and Prop.~\ref{propbrauer}(ii)
show that $\mychar(F)=0$, $F$ is euclidean, and
${}_2\text{Br}(F)=\Br(F)(2)=\{F,\oqt\}$. Since the $2$-primary component
of $D$ therefore must be $\oqt$, $n/2$ must be odd.
%% \begin{flushright} $\Box$ \end{flushright}
\qed

\bigskip

\begin{remark}
The result of Lemma \ref{zerochar} is definitely not true in prime
characteristic, since cyclic Galois extensions of degree
$\mychar(F)$ are Artin-Schreier extensions, not Kummer extensions.
For example, let $p$ be a prime number, and let $\mathbb F_p$ be
the finite field with $p$ elements. In an algebraic closure
%% ${\mathbb F_p}^{alg}$ 
of $\mathbb F_p$, let $L_i$ be the field
with $[L_i\co \mathbb F_p]=i$ for all $i \in \mathbb N$, and let
$F=\bigcup_{p\nmid i}L_i$;
so the supernatural number $[F\co\mathbb F_p]$ is the product of $q^{\infty}$
for all primes $q \ne p$.  
 Then, $F^*={F^*}^q$ for every prime $q$,
but for those primes  $q$ with $p\mid [\mathbb F_p(\mu_q)\co \mathbb
F_p]$, we have $\mu_q \nsubseteq F$. (E.g., if~$p=3$, then $\mu_7
\nsubseteq F).$)
\end{remark}

%%%
\section{Maximal subgroups of the  multiplicative group of a quaternion
algebra}\label{mainsec}

In this section we shall prove that the multiplicative group of a
quaternion division algebra contains maximal subgroups. We will
see that the most difficult case is that of quaternion algebras
over euclidean fields.  As shown by Prop.~\ref{euclidnonormal},
%% it is an easy exercise to show that
such division algebras may not have any normal maximal subgroups.

%% Recall that a field $F$ is called real pythagorean if $-1 \not \in
%% {\mul{F}}^2$ and sum of any two square elements is a square in
%% $F$. From the Artin-Schreier theorem, it follows immediately that $F$
%% is an ordered field. A field $F$ is called euclidean if $F$ has an
%% ordering $\leq$ for which every positive element is a square. Thus
%% $F^*={F^*}^2 \bigcup -{F^*}^2$ (disjoint union) and $F$ is real
%% pythagorean. Over such fields, the only quaternion division
%% algebra is the ordinary one.

\begin{theorem}\label{mainthm}
Let $\mQ$ be a quaternion division algebra with center $F$. Then
the multiplicative group of $\mQ$  has a 
 maximal subgroup.
\end{theorem}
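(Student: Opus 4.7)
The plan is to reduce to the case of a euclidean center and then to exhibit an explicit (non-normal) maximal subgroup.

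\textbf{Reduction.} If $F$ is not euclidean, Proposition~\ref{euclidean} gives that $\CK(M_2(\mQ))$ is a nontrivial $4$-torsion abelian group; by Lemma~\ref{ablemma}(ii) it has a subgroup of index~$2$, whose preimage under the natural surjection $\mQ^* \twoheadrightarrow \CK(M_2(\mQ))$ is a maximal subgroup of index~$2$ in $\mQ^*$. So we may assume $F$ is euclidean, in which case $\mQ\cong\oqt$.

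\textbf{The candidate subgroup.} Since, by Proposition~\ref{euclidnonormal}, $\mQ^*$ may admit no normal maximal subgroup in this case, I propose the non-normal subgroup
\[
H \ := \ N_{\mQ^*}(L^*) \ = \ L^* \cup L^*\qj \ = \ L^* \rtimes \langle \qj \rangle,
\]
where $L = F(\qi)$ is a maximal subfield of $\mQ$. Non-normality is immediate: for $q = 1+\qj$ one checks $q\qi q^{-1} = -\qk$, so $qLq^{-1} = F(\qk) \ne L$ and hence $qHq^{-1} = N_{\mQ^*}(F(\qk)^*) \ne H$.

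\textbf{Maximality.} Let $K$ be a subgroup with $H \subsetneq K \subseteq \mQ^*$, and pick $k\in K\setminus H$. Then $L' := kLk^{-1}$ is a maximal subfield of $\mQ$ distinct from $L$, and $K \supseteq \langle L^*, L'^*\rangle$. Since $\mQ^* = F^{*2}\cdot\mQ^{(1)}$ (the reduced norm splits via the positive square root, which exists because $F$ is euclidean) and $F^{*2}\subseteq L^*\cap L'^*$, maximality of $H$ reduces to the claim
\[
\langle L^{(1)}, L'^{(1)}\rangle \ = \ \mQ^{(1)} \qquad\text{for any two distinct maximal subfields } L, L' \text{ of } \mQ.
\]
The proposed approach is an Euler-angle decomposition: passing to the quotient $\mQ^{(1)}/\{\pm 1\}\cong SO_3(F)$ (an isomorphism by the half-angle argument valid in euclidean $F$), every rotation should be expressible as $R_{\hat a}(\alpha) R_{\hat b}(\beta) R_{\hat a}(\gamma)$ about the axes $\hat a,\hat b$ of $L, L'$; lifting through the $\{\pm 1\}$-cover yields $\mQ^{(1)} = \langle L^{(1)}, L'^{(1)}\rangle$.

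\textbf{Main obstacle.} The principal difficulty is proving this Euler-angle decomposition algebraically over an arbitrary euclidean field---not merely over $\R$, where trigonometric methods apply directly. Concretely, one must show that for any $R \in SO_3(F)$ and any two non-parallel $F$-rational axes $\hat a,\hat b$, the resulting system for $(\alpha,\beta,\gamma)$ is solvable with all trigonometric values lying in $F$, and that the required half-angle square roots are themselves in $F$. The euclidean hypothesis---that every positive element is a square---provides exactly the positivity needed, but the algebraic manipulation has to be executed purely in $F$, without appeal to a real closure.
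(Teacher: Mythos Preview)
Your reduction step is correct and matches the paper. The gap is in the euclidean case: the subgroup $H=L^*\cup L^*\qj$ need not be maximal when $F$ admits nontrivial $\Q$-infinitesimals, i.e.\ when the maximal ideal $M$ of the natural valuation ring $V=\{\,b\in F:|b|\le n\text{ for some }n\in\mathbb N\,\}$ is nonzero. Take any nonzero $s_0\in M$, set $c_0=\sqrt{1-s_0^{\,2}}$ and $k=c_0+s_0\qj$. Then $k\notin H$, but $k\qi k^{-1}=(c_0^2-s_0^2)\qi-2c_0s_0\qk$ lies at distance $2|s_0|\in M$ from $\qi$ on the unit sphere $S(P)$ of pure quaternions. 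Writing $\Delta=\{\alpha\in S(P):\|\alpha-\qi\|\in M\}$, one checks via the triangle inequality that conjugation by each of $L^*$, $\qj$, $k$, and $k^{-1}$ carries $\Delta\cup(-\Delta)$ into itself; hence the $\langle H,k\rangle$-orbit of $\qi$ lies in $\Delta\cup(-\Delta)\subsetneq S(P)$. Since $\mQ^*$ acts transitively on $S(P)$, this gives $\langle H,k\rangle\subsetneq\mQ^*$, so $H$ is not maximal. In particular your Euler-angle claim is not merely hard to verify over $F$---it is \emph{false}: when the axis of $L'=kLk^{-1}$ is infinitesimally close to that of $L$, no product of rotations about these two axes can carry $\qi$ to $\qj$. (Equivalently: reduce modulo $M$; both $L^{(1)}$ and $L'^{(1)}$ land in the abelian group of rotations about the single axis $\overline\qi$.)

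The paper's remedy is exactly to thicken $H$ into $G=\{x\in\mQ^*:x\qi x^{-1}\in\Delta\cup(-\Delta)\}$, which contains $H$ properly whenever $M\ne0$ and coincides with your $H$ when $F=\R$. Maximality of $G$ then rests on the fact that any $y\notin G$ moves $\qi$ by a \emph{non}-infinitesimal amount; since that amount is a unit of $V$, a bounded (archimedean) number of applications of $y$ interleaved with elements of $L^*$ sweeps $\qi$ across every latitude of $S(P)$. So your geometric intuition is right, but the candidate subgroup must already absorb all infinitesimal perturbations of the axis before the transitivity argument can succeed.
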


\begin{proof}

If $\mQ$ has no normal maximal subgroup, then by Prop.~\ref{euclidean}, $\mQ=
\oqt$, where $F$ is a euclidean field.
 We will
show in Th.~\ref{euclid} below that such a  $\mQ$ nonetheless has a 
nonnormal maximal
subgroup.  That will complete the proof of this theorem.
\end{proof} 

The rest of this section is devoted to showing that the
quaternion division algebra $\oqt$ over a euclidean field
$F$ contains a
(non-normal) maximal subgroup. This will be done by a refinement of the
argument given in \cite{mahdavi}, attributed to C.~Ohn, showing that
for $F= \rr$, $\left(\frac{-1,-1}\rr\right)$ has a maximal
subgroup.  Significant added complexity arises here because we
need to take into account the possible existence of infinitesimals
with respect to the ordering on~$F$.  A different proof that
$\left(\frac{-1,-1}\rr\right)$ has  maximal subgroups is given in
\cite{akbari4}.
\medskip

Let $F$ be a euclidean field. Then $F$ has a valuation ring $V$
which is determined by the ordering:
$$
V=\{b \in F \mid |b| \leq n \text{ for some } n \in \Bbb N\}\, ,
$$
whose maximal ideal is
$$
M=\{b\in F \mid |b| \leq 1/n \text{ for every } n \in \Bbb N\}
$$
(see, e.g., \cite{sch} p.~135).
Note that $F \backslash V$ is the set of elements ``infinitely
large'' relative to the rational numbers $\Bbb Q \subseteq F$.
Also, $M$ is the set of
elements of $F$ ``infinitesimal'' relative to~$\Bbb Q$.

We will need some geometric properties for inner product spaces
 over the euclidean field~$F$, which are familiar when the field is $\rr$.

For any $n \in \Bbb N$, let $F^n=\{(a_1,a_2,\ldots,a_n) \mid a_i
\in F\}$. For $\alpha=(a_1,a_2,\ldots,a_n)$ and
$\beta=(b_1,b_2,\ldots,b_n)$ in $F^n$, we have the dot product:
$\alpha \cdot \beta=a_1b_1+a_2b_2+\ldots+a_nb_n \in F$. The norm
$\|\alpha\|= \sqrt{a_1^2+a_2^2+\ldots+a_n^2}=\sqrt{\alpha \cdot
\alpha} \in F$ (as $F$ is euclidean). Note that the following
basic tools carry over to this setting: The Cauchy-Schwarz
inequality: $| \alpha \cdot \beta | \leq \| \alpha \| \| \beta
\|$, and the triangle inequality: $\| \alpha+\beta \|\leq \|
\alpha \|+\| \beta \|$. We write $\alpha \bot \beta $ if $\alpha
\cdot \beta =0$.

Now let
$$
\Orth(F) \ = \ \{A \in M_n(F) \mid A^t A=I\}
$$
and
$$
\SO(F) \ = \ \{ A \in \Orth(F) \mid \det(A)=1\}\,.
$$
So, for any $A\in \Orth(F)$ and any $\alpha, \beta\in F^n$,
we have
$(A\alpha\cdot A\beta) = (\alpha\cdot \beta)$ and $\|A\alpha\|
= \|\alpha\|$.
 Clearly,
$$
\textstyle{\SO[2](F)\ =\ \big \{ \left(%
\begin{smallmatrix}%{cc}
  c & -s \\
  s & c \\
\end{smallmatrix}%
\right) \mid c,s \in F, \ c^2+s^2=1\big \}}
$$ 
is an abelian group,
whose elements can be thought of as ``rotations''. Also,
$$
\Orth[2](F)\, \backslash\, 
\SO[2](F) \ =\ \big \{ \left(%
\begin{smallmatrix}%{cc}
  c & s \\
  s & -c \\
\end{smallmatrix}%
\right) \mid c,s \in F, \,c^2+s^2=1\big \}.$$ Each $A \in \Orth[2](F)
\backslash \SO[2](F)$ has eigenvectors 1,-1 with orthogonal
eigenspaces. So, $A$ is then a reflection.

Note that as $F$ is euclidean, $\SO[2](F)$ is $2$-divisible. For,
if $A=
\left(\begin{smallmatrix}  c & -s \\
  s & c \\
\end{smallmatrix}\right)
%% \left(%
%% \begin{array}{cc}
%%   c & -s \\
%%   s & c \\
%% \end{array}%
%% \right)
\in \SO[2](F)$, with $c^2+s^2=1$, then $|c| \leq 1$, so
$c+1 \geq 0$.
Let $B= \left(%
\begin{smallmatrix}%{cc}
  a & -b \\
  b & a \\
\end{smallmatrix}%
\right)$, where, if $c=-1$ (so $s=0$), $a=0$ and $b=\pm 1$, while
if $c \not = -1$, $a=\pm \sqrt{(c+1)/2}$ and $b=s/2a$. Then
$a^2+b^2=1$ (and $a,b\in F$ as $c+1 \geq 0$ and $F$ is euclidean),
so $B \in \SO[2](F)$, and $B^2=A$. Basically we are just invoking
the half-angle formula from trigonometry. 

Now, let $A \in
\SO[3](F)$. Observe  that as $A^tA = I$ in $M_3(F)$, we have
\begin{align*}
\det(A-I) \ &= \ \det((A-I)^t) \ = \ \det(A^t - A^tA) \ = \ \det(A^t) 
\det(I -A) \\  &= \ 1\cdot (-1)^3\det(A-I) \ = \ -\det(A-I)\, .
\end{align*}
Since $\mychar(F) \ne 2$, this shows that $\det(A-I) = 0$, 
proving that $1$ is an eigenvalue of $A$.  Let $v$ in $F^3$ be a
$1$-eigenvector of $A$, and enlarge $\{\,v\,\}$ to an orthogonal base 
$\mathcal B = \{\, v, v_2, v_3\,\}$ of~$F^3$. The matrix of $A$ as a 
linear transformation on $F^3$ relative to the base $\mathcal B$ is
 $\left(% 
\begin{smallmatrix}%{cc}
  1 & 0 \\
  0 & D \\
\end{smallmatrix}%
\right)$ where $D \in \Orth[2](F)$, and $\det(D)=\det(A)/1=1$; so $D
\in \SO[2](F)$, i.e., $D$ is a ``rotation.''  Thus we can think of
$A$ as a rotation about the axis determined by the 1-eigenvector
$v$. Because $D$~is the square of a matrix in $\SO[2](F)$,
$A$ is the square of a matrix in $\SO[3](F)$. Thus $\SO[3](F)$ is
$2$-divisible (though non-abelian).

Let $\mH=\oqt$ be the ordinary quaternion division algebra over
$F$ with its standard base $\{1,\qi,\qj,\qk\}$
and standard involution %$\overline{\phantom}$
given by $\overline{a+b\qi+c\qj+d\qk} = a-b\qi-c\qj-d\qk$.
We identify $\mH$
with $F^4$ via
$
\ a+b\qi+c\qj+d\qk \ \leftrightarrow  \ (a,b,c,d)\,.
$
Then, for $x
\in \mH$, we have $\| x \|=\sqrt{\Nrd(x)}$, where for
$x=a+b\qi+c\qj+d\qk,$
$$
\Nrd(x) \ = \ x \overline x \ = \  a^2+b^2+c^2+d^2\,.
$$
Note also that the reduced trace of $x$ is $\Trd(x) =x+\overline x = 2a$.

Let
$$
S(\mH) \ = \  \{ x \in \mH \mid \|x \|=1 \}
$$
be the unit
sphere in $\mH$. Let $P
%% =F\text{-span} \{\qi,\qj,\qk\}$, i.e.,
%% $P
=\{b\qi+c\qj+d\qk  \mid b,c,d \in F\}$, the ``purely imaginary part'' of~
$\mH$. Note that
\begin{equation}\label{Pdesc}
P \ =\ \{ \alpha \in \mH \mid \Trd(\alpha)=0\} \
= \ \{ \alpha \in\mH \mid \alpha^2 \in F,
\alpha \not \in F \}\cup \{0\}\,.
\end{equation}

Let
$$
S(P) \ = \ \{ \alpha \in P \mid \|\alpha \|=1\}\, ,
$$
the unit
sphere in $P$. The geometry in $P$ is nicely tied to the
multiplication: A straightforward calculation shows that for
$\alpha=a_1\qi+a_2\qj+a_3\qk$ and $\beta=b_1\qi+b_2\qj+b_3\qk \in
P$, we have
\begin{equation}\label{alphabeta}
\alpha\beta \ = \  -(\alpha\cdot\beta) \, + \,\alpha \times \beta,
\end{equation}
where the cross product $\alpha \times \beta$
is the formal determinant
$$
\alpha \times \beta \ = \
\left|%
\begin{array}{ccc}
  \qi & \qj & \qk \\
  a_1 & a_2 & a_3 \\
  b_1 & b_2 & b_3 \\
\end{array}%
\right|\, \in P\, .
$$
 Since $\beta \times \alpha =- \alpha \times
\beta$, formula \eqref{alphabeta} shows that $\alpha \cdot \beta
= -\frac{1}{2}(\alpha
\beta+\beta\alpha)$. Thus, $\alpha \bot \beta$ if and only if
$\alpha$ and $\beta$ anticommute.

Now, $\mH ^*$ acts on $\mH$ by conjugation: For $x \in \mH^*, y
\in \mH$, set
$$
x * y\, =\, x y x^{-1}\,.
$$
Note that since conjugation
preserves the reduced norm, it also preserves the norm, i.e., $\|x
* y\|=\|y\|$, and hence it also preserves the dot product, i.e.,
$$
(x* y)\cdot(x * z) \ = \ y\cdot z
$$
(as $2(y\cdot z)=\|y+z\|^2-\|y\|^2-\|z\|^2).$ Note
that for $x \in \mH^*$ and $ \alpha \in P$,
by \eqref{Pdesc} above we have
$x * \alpha \in P$,
since $\Trd(x * \alpha)=\Trd(\alpha)=0$ (or, $x* \alpha \not \in
F$ as $\alpha \not \in F$ (assuming $\alpha \not =0$), but
$(x*\alpha)^2=x * (\alpha^2)=\alpha^2 \in F$). Thus, the
conjugation action of
$\mH^*$ on $\mH$ restricts to an action of $\mH^*$ on $P$, which
is norm- and dot product-preserving. So, $\mH^*$ also acts on the
unit sphere $S(P)$. There is a very nice geometric description of
this action, as follows:

Take any $x \in \mH^*$. Since conjugation by $x$ coincides with
conjugation by $\frac{1}{\|x\|}x$, we may assume that $\|x \|=1$.
Then we can write $x=c+sp$, for some $c,s \in F, p \in P$ with
$\|p\|=1$, so 
$c^2+s^2=1$ as $\|x\|=1$. If $s=0$, then $x \in F$, so $x *
\alpha=\alpha$ for all $\alpha \in P$. So, assume $s\not =0$.
Then, $s$~and $p$ are unique up to factor of $-1$.
% , and $p \not =0$.

For $\{p\}^{\bot}=\{y \in \mH \mid y\cdot p=0\},$ we have
$\dim_F(\{p\}^{\bot} \cap P)=2$. So, there is $q \in S(P)$ with $q
\bot p$. Set $r=pq$. We have $p^2=-\|p\|=-1,q^2=-1$, and
$r=pq=-qp$; hence, $r^2=-1, qr=-rq=p$, and $r p=-p r=q$. From this,
it is clear that there is an \mbox{$F$-automorphism}  of $\mH$ given by
$\qi \mapsto p$ and $\qj \mapsto q$. In particular, $\{p,q,r\}$ is
an orthogonal base of $P$. Since $\|x \|=1$ and $x=c+sp$, we have
$x^{-1}=\overline x=c-sp$. Thus, for any $\alpha= a p+bq+dr$ where
$a,b,d \in F$, we have
\begin{equation}\label{star}
\begin{aligned}
x * \alpha\  &= \ (c+sp)\,(ap+bq+dr)\,(c-sp)
%% =
%% \nonumber
\\
& =\ ap\,+\,[(c^2-s^2)b-2csd]q\,+\,[2csb+(c^2-s^2)d]r.
\end{aligned}
\end{equation}
That is, the matrix of the $F$-linear transformation $\alpha
\mapsto x*\alpha$ of $P$ relative to the orthogonal base
$\{p,q,r\}$ is
$$
\left(%
\begin{matrix}%{ccc}
  1 & 0 & 0 \\
  0 & {\ c^2-s^2 \ } & -2cs \\
  0 & 2cs & {\ c^2-s^2\ } \\
\end{matrix}%
\right)\, .
$$
Heuristically, think of $c=\cos(\theta)$ and $s=\sin(\theta)$ for
some imagined angle $\theta$, so that $\left(%
\begin{smallmatrix}%{cc}
  c & -s \\
  s & c \\
\end{smallmatrix}%
\right)$ is the matrix for rotation by $\theta$. Then, $x*$ is
rotation by an angle $2\theta$ about the $p$-axis.

Let's imagine the $2$-sphere $S(P)$ oriented so that $\qi$ is at
the north pole and
$$
E\,=\,F\text{-span}\{\qj,\qk\} \cap S(P)
$$
 is the
equator. Take any $\beta=c_1\qj+s_1\qk \in E$, so $c_1^2+s_1^2=1$,
and choose 
${B=\left(%
\begin{smallmatrix}%{cc}
  c_0 & -s_0 \\
  s_0 & c_0 \\
\end{smallmatrix}%
\right) \in \SO[2](F)}$ such that $B^2=\left(%
\begin{smallmatrix}%{cc}
  c_1 & -s_1 \\
  s_1 & c_1 \\
\end{smallmatrix}%
\right)$ (recall that $\SO[2](F)$ is 2-divisible). Then, for
$y=c_0+s_0 \qi$, formula~(\ref{star}) shows that $y*\qj=\beta$.
Thus, the $\mH^*$-orbit of $\qj$ contains all of $E$. Similarly, for
any $\gamma \in S(P)$,  take a two dimensional subspace $W$
of $P$ containing $\qj$ and $\gamma$, and choose $p \in S(P)$ with
$p \bot W$. Then we can take $q=\qj$ and $r = p\qj$; $\{q, r\}$
is an orthonormal base of~$W$, so we have  $\gamma=c_1q+s_1r$
with $c_1^2+s_1^2=1$.
From the $2$-divisibility of $\SO[2](F)$, as above, there exist
 $c_0,s_0\in F$  with $
c_0^2 + s_0^2 = 1$, $c_0^2 - s_0^2 = c_1$ and $2c_0s_0 =s_1$;
if we   set $x=c_0+s_0p$, then formula~\eqref{star} shows that
$x*\qj=\gamma$. Thus, $\mH^*$ acts transitively on $S(P)$.

\begin{theorem} \label{euclid} Let $F$ be a euclidean field, and
let $\mH=\oqt$. Then
$\mH^*$ contains a maximal  subgroup.
\end{theorem}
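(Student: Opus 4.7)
My plan is to construct a non-normal maximal subgroup of $\mH^*$, refining for a general euclidean $F$ the argument attributed to C.~Ohn in \cite{mahdavi} for the classical case $F=\R$. The central tool is the transitive conjugation action of $\mH^*$ on the unit sphere $S(P)$ of pure imaginary quaternions developed above, through which $\mH^*/F^*$ is identified with $\SO[3](F)$. Any maximal subgroup produced this way will be non-normal by Proposition~\ref{euclidean}.

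In the classical case $F=\R$, where the valuation ring $V$ equals $F$ and $M=0$, the idea is to fix a base point $q_0 \in S(P)$ (say $q_0=\qi$), consider the stabilizer $F(q_0)^*$ (corresponding to $\SO[2](F)\subset \SO[3](F)$), and extend it to a maximal subgroup by direct use of the transitive action together with the explicit rotation formula~\eqref{star}. For a general euclidean $F$ the same skeletal strategy applies, but the subgroup $H\supseteq F^*\cdot F(q_0)^*$ must also respect the infinitesimal structure of $F$ coming from $V$. I would introduce a notion of infinitesimal neighbourhood in $S(P)$ by declaring $q,q'\in S(P)$ infinitesimally close precisely when $q-q'\in M\qi + M\qj + M\qk$, and then take $H$ to be (roughly) the subgroup of $\mH^*$ whose conjugation action sends $q_0$ to a point infinitesimally close to $\pm q_0$. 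Using the rotation description $x=c+sp$ with $c^2+s^2=1$, this selects exactly those $x$ for which $s\in M$ or $p$ is close (in the above sense) to $\pm q_0$. That $H$ is proper is straightforward: an element such as $1+\qj$ induces a rotation of $q_0$ by a manifestly non-infinitesimal amount.

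The principal obstacle is verifying the maximality of $H$: given any $y\in \mH^*\setminus H$, one must prove $\langle H,y\rangle=\mH^*$. The strategy is to exploit the transitivity of the $\mH^*$-action on $S(P)$ together with the $2$-divisibility of $\SO[2](F)$ (and hence of $\SO[3](F)$) to show that $\langle H,y\rangle$ already acts transitively on $S(P)$. Concretely, $y$ supplies a non-infinitesimal displacement of $q_0$, and conjugating $y$ by the infinitesimal rotations already inside $H$, then iteratively halving angles via the $2$-divisibility of $\SO[2](F)$, produces displacements filling out a valuation-theoretically dense set of points of $S(P)$; combined with the infinitesimal rotations in $H$ this sweeps out all of $S(P)$. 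Since the full stabilizer of $q_0$ is contained in $H$ and $F^*\subseteq H$, a subgroup of $\mH^*$ strictly containing $H$ must surject onto $\SO[3](F)$ with kernel $F^*$, forcing $\langle H,y\rangle=\mH^*$. Handling the infinitesimal regime is exactly the ``significant added complexity'' absent from Ohn's original argument, where topological density over $\R$ short-circuits the analysis; in the general euclidean setting this topological input is replaced by the algebraic $2$-divisibility of the rotation groups combined with careful accounting of membership in $V$ and $M$ via the formula~\eqref{star}.
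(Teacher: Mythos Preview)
Your candidate subgroup and the overall architecture coincide with the paper's: the paper takes exactly $G=\{x\in\mH^*\mid x*\qi\in\Delta\cup(-\Delta)\}$, where $\Delta$ is the infinitesimal neighbourhood of $\qi$ on $S(P)$, checks it is proper, and proves maximality by showing that for any $y\notin G$ the group $K=\langle G,y\rangle$ satisfies $K*\qi=S(P)$, which forces $K=\mH^*$ since the stabilizer $G_0=F(\qi)^*$ of $\qi$ already lies in $G$.

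The gap is in the mechanism you propose for this last step. ``Iteratively halving angles via the $2$-divisibility of $\SO[2](F)$'' cannot be carried out \emph{inside} $K$: a square root of $y$ in $\SO[3](F)$ has no reason to lie in $\langle G,y\rangle$, and square roots taken inside $G_0\cong\SO[2](F)$ fix $\qi$, so contribute nothing toward moving it. In the paper, $2$-divisibility of $\SO[2](F)$ is used only in the preliminaries, to establish that the full group $\mH^*$ acts transitively on $S(P)$; it plays no role in the maximality argument. The paper's actual argument is \emph{additive}, stepping through latitudes $L_a=\{\alpha\in S(P)\mid \alpha\cdot\qi=a\}$. After normalizing $y$ to the form $c_0+s_0\qj$ with $\|y\|=1$ and setting $c=c_0^2-s_0^2$, $s=2c_0s_0$, the rotation formula~\eqref{star} yields $(G_0y)*L_a\supseteq L_b$ for every $b$ with $a-s^2\le b\le a$ (whenever $0\le a\le c$), so $(G_0y)^{k+1}*\qi$ contains every $L_b$ with $c-ks^2\le b\le c$. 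The decisive valuation-theoretic input is then not ``density plus infinitesimal corrections'' but the Archimedean property built into the definition of $V$: since $y\notin G$ forces $s\notin M$, one has $c/s^2\in V$, hence $c/s^2<n$ for some integer $n$, and \emph{finitely many} iterations already reach $L_0$. Together with $\qj*L_b=L_{-b}$ and an easier argument for $b\ge c$, this gives $K*\qi=S(P)$ outright, with no appeal to infinitesimal rotations to fill gaps.
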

\begin{proof} Recall that $M$ is the set of $\Bbb
Q$-infinitesimal elements of $F$. Let
$$
\Delta \ = \ \{ \alpha \in S(P) \mid \|\alpha -\qi  \|\in M\},
$$
the set
of elements of $S(P)$ ``infinitesimally near'' $\qi$.

Let $C=\{a+b\qi \mid a,b \in F\} \cong F(\sqrt{-1})$ which is the
centralizer of $\qi$ in $\mH$. Let
$$
G_0 \ = \ C^* \ = \ \{a+b\qi \mid a \not
= 0 \text{ or } b \not =0 \} \  \subseteq  \ \mH^*,
$$
which is the
stabilizer of $\qi$ in $\mH^*$. For each $a \in F$ with $|a| \leq
1$, let
$$
L_a \ =  \  \{a\qi+b\qj+d\qk \in P \mid b^2+d^2=1-a^2\},
$$
the ``$a$-latitude'' on $S(P)$. We saw above that $G_0$ acts
transitively on $E=L_0$, and an analogous argument shows that
$G_0$ acts transitively on each $L_a$.
 Since  $\qj *(a\qi+ b\qj + d\qk) =
-a\qi+ b\qj -  d\qk$, we have
$\qj * \qi=- \qi$ and $\qj*L_a=L_{-a}$ for each
$a$-latitude.
%% Let
%% $$
%% G_1\ = \ G_0 \,\cup \,\qj G_0,
%% $$
%% which is a
%%subgroup of $\mH^*$, as $\qj$ normalizes $G_0$ and $\qj^2 \in G_0$.

 Let
$$
G\ = \ \{x \in \mH^* \mid x*\qi \in \Delta \cup -\Delta \}\, .
$$
Then,
$G_0 \subseteq G$ and $\qj \in G$. Note that for $x \in \mH^*$, if $x*\qi \in
\Delta$, then $x*\Delta \subseteq \Delta$. For, if $\alpha \in
\Delta$, then
\begin{align*}
\|x*\alpha-\qi \| \  &\leq  \ \|x*\alpha
-x*\qi\|\,+\,\|x*\qi-\qi \| \ = \  \|x*(\alpha-\qi) \|\,+\,\|x*\qi-\qi
\|\\
&= \ \|\alpha-\qi\|\,+\,\| x*\qi-\qi\| \ \in \  M +M\, ,
\end{align*}
 so $x* \alpha \in
\Delta$. Likewise, if $x*\qi \in -\Delta$ then $x*\Delta \subseteq
-\Delta$ and $x*(-\Delta) \subseteq \Delta$. Therefore,
$G$ is closed under multiplication.  Furthermore, for
$\epsilon = \pm 1$, we have $\|\overline x*\qi-\epsilon \qi\|
=\|-\big(\overline{x*\qi-\epsilon \qi}\big)\| = \|x*\qi-\epsilon \qi\|$.
Hence, if $x\in G$, then $\overline x \in G$.  Because $x^{-1} =
\frac1{\|x\|}\overline x$ and $F^*\subseteq G$, it follows that $G$ is closed
under inverses;  hence, $G$ is a subgroup of $\mH^*$.
 Since $\mH^*$ acts
transitively on $S(P)$ but $G*\qi \subseteq \Delta \cup -\Delta
\subsetneqq S(P)$, $G$ must be a proper subgroup of $\mH^*$.

\medskip
\noindent {\bf Claim 1}: $G$ is a maximal  subgroup of
$\mH^*$.

\noindent {\it Proof of Claim 1}. Take any $y \in \mH^* \backslash
G$, and let $K= \langle y,G\rangle$. We show that $K=\mH^*$ by
proving that $K*\qi=S(P)$. For then, for any $h \in \mH^*$, there is
$z \in K$ with $h*\qi=z*\qi$. Then $z^{-1}h*\qi=\qi$, so that
$z^{-1}h \in G_0 \subseteq K$; hence $h=z(z^{-1}h) \in K$. Let
$$
y\ = \ r+t\qi+u\qj+v\qk \ = \ (r+t\qi)+(u+v\qi)\qj\, ,
$$
 with $r,t,u,v \in F$.
Replacing $y$ by $y\qj$ if necessary (without changing $K$, as
$\qj \in G$), we can assume $r+t\qi \not =0$. Then (as $r+t\qi \in
G_0 \subseteq G$), we can replace $y$ by $(r+t\qi)^{-1}y$, so we
can assume $t=0$. Furthermore, as $F^* \subseteq G$, we can
replace $y$ by $\frac{1}{\|y\|}y$ without changing $K$; so we can
assume that $\|y\|=1$. Thus, $y=c_0+s_0p$, where $c_0,s_0 \in F$ with
$c_0^2+s_0^2=1,p \in P, \|p\|=1$ and $p\bot\qi$. Without loss of generality,
we may assume that $p=\qj$. (For, if $p \not =\qj$, we can work with
the orthonormal base 
$\{\qi,p,\qi p\}$ of $P$ instead of $\{\qi,\qj,\qk\}$,
and the same argument as below clearly goes through.) Thus,
$y=c_0+s_0\qj$ where $c_0,s_0 \in F$ with $c_0^2+s_0^2=1$. Formula
(\ref{star}) then yields, for any $a,e,d \in F$,
\begin{equation}\label{star2}
\begin{aligned}
y*(a\qi+e\qj+d\qk)\ &= \ (c_0+s_0\qj)\,(a\qi+e\qj+d\qk)\,(c_0-s_0\qj)
%=\nonumber
\\
&= \ [(c_0^2-s_0^2)a+2c_0s_0d]\qi\,+\,e\qj\,+\,[-2c_0s_0a+(c_0^2-s_0^2)d]\qk
%=\nonumber
\\
&= \ (ca+sd)\qi\,+\,e\qj\,+\,(-sa+cd)\qk,
\end{aligned}
\end{equation}
where
$$
c\,=\,c_0^2-s_0^2 \text{ \ and  \ } s\,=\,2c_0s_0
$$
 (so, $c^2+s^2=1$). In
particular $y*\qi=c\qi-s\qk$.
We have $c,s \in V$ (the valuation ring) since $c^2+s^2=1$
shows $|c| \leq 1$ and $|s| \leq 1$.
Note further that
$$
\|y*\qi-\qi\|^2 \ = \ (c-1)^2\,+\,s^2 \ = \ (c-1)^2\,+\,(1-c^2)
\ =\ 2(1-c),
$$
and
likewise $\|y*\qi+\qi\|^2=2(1+c)$. Since $y *\qi \not \in \Delta$
and $y*\qi \not \in -\Delta$ by hypothesis, we must have $1+c
\not \in M$, $1-c \not \in M$; hence $s^2=(1-c)(1+c) \not \in
M$, so $s \not \in M$.

By replacing $y$ by $y\qj$ if necessary (which interchanges $|c_0|$
and $|s_0|$), we may assume $c\geq 0$. Also, by replacing $y$ by
$y^{-1}$ if necessary (which replaces $s_0$ by $-s_0$ without changing
$c_0$), we may assume $s \geq 0$.

\medskip
\noindent {\bf Claim 2}: Finitely many applications of elements of
$K$ map $\qi$ to any point on any latitude $L_b$, for any $b$ with
$0 \leq b \leq 1$.

Since $\qj*L_b=L_{-b}$, it follows from Claim 2 that
$K*\qi=\bigcup_{-1\leq b \leq 1}L_b=S(P)$, which, as we
showed above, proves Claim 1.

\noindent {\it Proof of Claim 2}. Recall that for $|a| \leq 1$,
$$
L_a \ = \  \{a\qi\pm \sqrt{1-a^2-d^2}~\qj +d\qk \mid |d| \leq \sqrt{1-a^2}\}.
$$
Thus, formula (\ref{star2}) shows that for $0\leq a \leq 1$,
%$y*L_a$ meets $L_{t}$ for every $t\in F$ with
\begin{equation}\label{star3}
\text{$(y*L_a)\cap L_{b}\ne \varnothing$  \  \ for every $b\in F$ with  \ \ }
ca -s \sqrt{1-a^2}  \ \leq  \ b  \ \leq \  ca+s\sqrt{1-a^2}.
\end{equation}

If we set $a = c$, so $\sqrt{1-a^2} = s$, condition~\eqref{star3}
says that $L_{c}$ meets $L_b$ for all $b$ with
$c^2 - s^2 \le b\le c^2 + s^2 = 1$;
in particular, this holds for $c \le b\le 1$, since $c^2-s^2
\le c^2 \le c \le 1$ (as $0\le c\le 1$).
Now, $y*\qi \in L_{c}$,
and $G_0$ acts transitively on $L_{c}$; so, $(G_0y)*\qi = L_c$.
For any~$b$ with $c \leq
b \leq 1$, we have just seen that $y*$ maps some point on $L_{c}$
to a point on $L_{b}$. Also $G_0$~acts transitively on $L_b$. So,
for any such $b$, $L_b \subseteq (G_0yG_0y) * \qi \subseteq
K*\qi$. Thus, $K*\qi$~contains all latitudes above~$L_{c}$.

To handle  the latitudes below $L_{c}$, we will need:
\begin{equation}\label{formii}
\text{ If } 0 \leq a \leq c, \text{ then  \ }
ca-s\sqrt{1-a^2}  \ \leq \  a-s^2
 \ \leq \  a  \ \leq \  ca+s\sqrt{1-a^2}.
\end{equation}
%\end{itemize}
%For (i) we have, as $0\leq c \leq 1, c^2-s^2\leq c^2\leq
%c\leq 1=c^2+s^2$.
To see this, note that since $0\leq a\leq c \leq 1$, we
have $\sqrt{1-a^2} \geq \sqrt{1-c^2}=s$. Thus,
$s^2\le  s\sqrt{1-a^2}$. Since 
$ca \leq a$, this yields
%s\sqrt{1-a^2} \geq s^2$, yielding 
the first inequality in \eqref{formii}.
The second inequality in \eqref{formii} is clear. The third inequality in
\eqref{formii} is equivalent to $2a^2 \leq 1+c$, which holds as $2a^2 \leq
2a$ (as~$0 \leq a \leq 1$) and $2a \leq 1+c$ (as $ a\leq c$ and
$a\leq 1$).

%We now consider the latitudes below $L_{c}$.
The inequalities in ~\eqref{formii} combined with (\ref{star3})
show that for all $a\in F$ with $0 \leq a \leq c$,
\begin{equation}\label{formiii}
 \text{for all $b\in F$ with $a-s^2 \leq b
\leq a$,}  \ \  \ \   (y*L_a) \cap L_b\,\ne \,\varnothing,
\text{  \ so \  } L_b\, \subseteq \, (G_0y)*L_a .
\end{equation}
 Thus (taking $a = c$ in \eqref{formiii}), for $b$ with for $c-s^2 \leq b
\leq c$, we have
$$
L_b \ \subseteq(G_0y)*L_{c} \ = \  (G_0y)^2*\qi \ \subseteq K*\qi\, .
$$
This proves Claim 2  if $c-s^2\le 0$, so we may assume
$c>s^2$.
For an integer $k\ge1$, with $c -ks^2\ge 0$,
suppose $L_b  \subseteq  K*\qi$ for $c-ks^2 \le b\le c$.
Then (taking $a = c-ks^2$ in~\eqref{formiii}), for all
$b$ with $c-(k+1)s^2 \le b\le
c - ks^2$, we have
 $$
L_b  \ \subseteq  \ (G_0y)*L_{c-ks^2} \ \subseteq \  (G_0y)K*\qi
 \ = \  K*\qi.
$$
Hence, $L_b \subseteq K*\qi$ for $c-(k+1)s^2\le b\le c$.
It follows by induction that
for all positive integers $n \le c/s^2$,
% $L_{c-ns^2}\subseteq (G_0y)^{n+1}*L_{c}$, and hence
$L_b  \subseteq K*\qi$ for  all $b$ with
$c-(n+1)s^2 \le b\le c$.

Because $s\notin M$, $s$ is a unit of the valuation ring
$V$; so $c/s^2\in V$.  Hence, by the definition of $V$, there is
a positive integer $m$ with $c/s^2 <m$.  Let $n+1$ be the smallest
such $m$.  Then, $n \le c/s^2 < n+1$, and $n\ge 1$ as $c/s^2 >1$.
For this $n$, since $c-(n+1)s^2 \le 0$, we proved in the previous
paragraph that for all $b$ with
$0\le b\le c$, we have $L_b\subseteq K*\qi$.  We proved this inclusion earlier
for $b$ with
$c\le b\le 1$.  This proves
Claim 2,  completing the proof of Claim 1 and Th.~\ref{euclid}.
\end{proof}

 Th.~\ref{mainprop} and Th.~\ref{euclid}
combine to yield Th.~\ref{main} stated in the Introduction. 
This theorem shows that to produce an example of a $D^*$ with no maximal subgroup, one
would have to find a field with a noncyclic division algebra of prime
degree.  The existence of such noncyclic division algebras is one of the
oldest and most challenging open questions in the theory of division algebras.

{\bf Acknowledgements.} The first named author would like to
acknowledge the support of Queen's University PR grant and EPSRC
EP/D03695X/1.
Part of the work for the paper was done while he was visiting the
second named author at the University of California at San Diego
in the Summers 2005 and 2006. He would like to thank him for his
care and attention.

\end{document}